\theoremstyle{plain}
\newtheorem{theorem}{Theorem}
\newtheorem{proposition}{Proposition}
\newtheorem{corollary}{Corollary}
\newtheorem{fact}{Fact}
\theoremstyle{definition}
\newtheorem{remark}{Remark}
\newcommand{\HH}{\begin{picture}(10,5)
\put(5,3){\circle{7}}
\qbezier(1.8,4)(5,4)(8,4)
\qbezier(1.8,2)(5,2)(8,2)
\qbezier(4,-0.2)(4,3)(4,6)
\qbezier(6,-0.2)(6,3)(6,6)
\end{picture}
}
\newcommand{\h}{\begin{picture}(10,5) 
\put(5,3){\circle{7}} 
\qbezier(1.5,3)(5,3)(8.5,3) 
\qbezier(4,0)(4,3)(4,6)
\qbezier(6,0)(6,3)(6,6)
\end{picture}}
\newcommand{\tr}{\begin{picture}(10,5)
\put(5,3){\circle{7}}
\qbezier(5,0)(5,3)(5,6)
\qbezier(2,1)(5,3)(8,5)
\qbezier(8,1)(5,3)(2,5)
\end{picture}}
\newcommand{\bigcircle}{\begin{picture}(10,5) \put(5,3){\circle{8}} \end{picture}}
\newcommand{\thr}{\begin{picture}(10,5)
\put(5,3){\circle{7}}
\qbezier(1.5,3)(5,3)(8.5,3)
\qbezier(3,0)(3,3)(3,5.5)
\qbezier(5,-0.5)(5,3)(5,6.5)
\qbezier(7,0)(7,3)(7,5.5)
\end{picture}}
\newcommand{\double}{\begin{picture}(10,5) \put(0,0){\vector(1,1){10}} \put(10,0){\vector(-1,1){10}}
 \end{picture}}
\begin{document}
\title[Sub-chord diagrams of knot projections]{Sub-chord diagrams of knot projections}
\author[N. Ito]{Noboru Ito}
\address{Waseda Institute for Advanced Study, 1-6-1 Nishi-Waseda Shinjuku-ku Tokyo 169-8050 Japan, TEL: +81-3-5286-8392}
\email{noboru@moegi.waseda.jp}
\author[Y. Takimura]{Yusuke Takimura}
\address{Gakushuin Boys' Junior High School, 1-5-1 Mejiro Toshima-ku Tokyo 171-0031 Japan}
\email{Yusuke.Takimura@gakushuin.ac.jp}
\keywords{knot projections; spherical curves; chord diagrams; Reidemeister moves}
\thanks{MSC2010: 57M25, 57Q35}
\begin{abstract}
A chord diagram is a circle with paired points with each pair of points connected by a chord.  Every generic immersed spherical curve provides a chord diagram by associating each chord with two preimages of a double point.  Any two spherical curves can be related by a finite sequence of three types of local replacement RI, RI\!I, and RI\!I\!I, called Reidemeister moves.  This study counts the difference in the numbers of sub-chord diagrams embedded in a full chord diagram of any spherical curve by applying one of the moves RI, strong RI\!I, weak RI\!I, strong RI\!I\!I, and weak RI\!I\!I defined by connections of branches related to the local replacements (Theorem~\ref{thm_sec4}).  This yields a new integer-valued invariant under RI and strong RI\!I\!I that provides a complete classification of prime reduced spherical curves with up to at least seven double points (Theorem~\ref{main2_thm}, Fig.~\ref{hyou5}): there has been no such invariant before.  The invariant expresses the necessary and sufficient condition that spherical curves can be related to a simple closed curve by a finite sequence of RI and strong RI\!I\!I moves (Theorem~\ref{main3}).  Moreover, invariants of spherical curves under flypes are provided by counting sub-chord diagrams (Theorem~\ref{flype_thm}).  
\end{abstract}
\maketitle

\section{Introduction.}
Any two knot projections (equivalently, generic immersed spherical curves) are related by a finite sequence of three types of local replacement, RI, RI\!I, and RI\!I\!I, called {\it{Reidemeister moves}}, on knot projections.  These replacements are defined by Fig.~\ref{ych5}.  
\begin{figure}[h!]
\includegraphics[width=8cm]{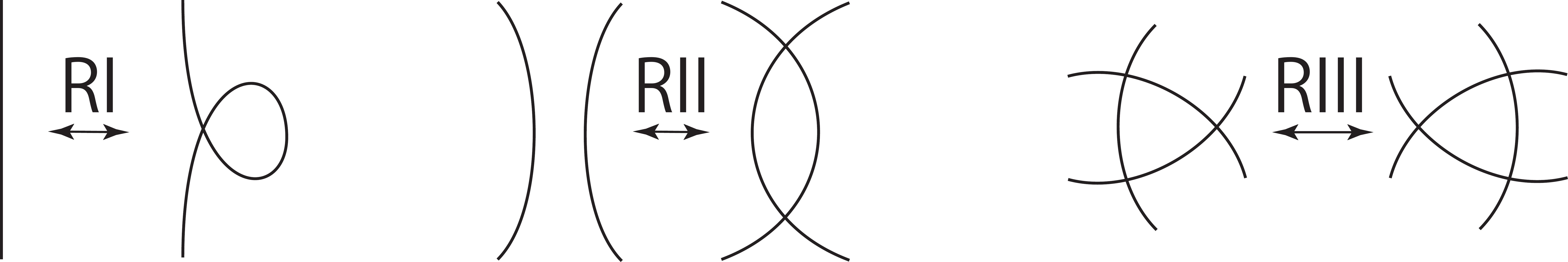}
\caption{Reidemeister moves RI, RI\!I, and RI\!I\!I from the left.}\label{ych5}
\end{figure}

A {\it{chord diagram}} is a circle with chords with endpoints at different places on the circle.  
Chord diagrams are often used to study knots or knot projections.  {\it{A chord diagram}} $CD_P$ {\it{of a knot projection}}, $P$, is a circle with the preimages of double points for which every pair of double-point preimages is connected by an arc.  Examples are shown in the leftmost columns of Figs.~\ref{hyou3} and \ref{hyou4}.  In this paper, a {\it{sub-chord diagram}} of $CD_P$ is a chord diagrams embedded in $CD_P$.   

This paper exhibits characteristics of chord diagrams $\otimes$, $\tr$, $\h$, and $\thr$ under five types of Reidemeister moves.  In particular, we split the second (resp.,~third) Reidemeister move into the strong and weak second (resp.,~third) Reidemeister moves, as follows.  
The strong (resp.,~weak) second Reidemeister move, strong RI\!I (resp.,~weak RI\!I), is defined as the local replacement in Fig.~\ref{ych6}.  
\begin{figure}[h!]
\includegraphics[width=8cm]{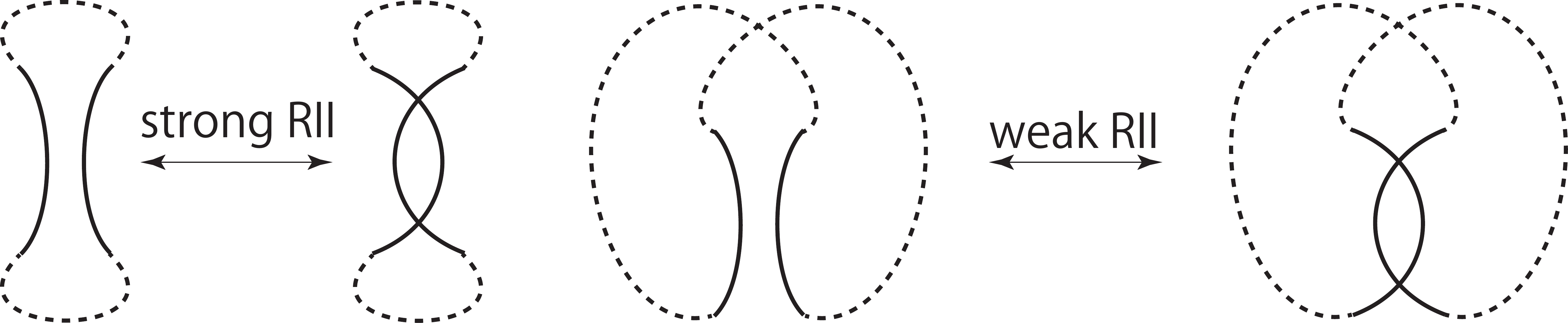}
\caption{Strong (left) and weak (right) second Reidemeister moves.  Dotted arcs indicate the connections of the branches.}\label{ych6}
\end{figure}
The strong (resp.,~weak) third Reidemeister move, strong RI\!I\!I (resp.,~weak RI\!I\!I), is defined as the local replacement in Fig.~\ref{ych7}.  
\begin{figure}[h!]
\includegraphics[width=8cm]{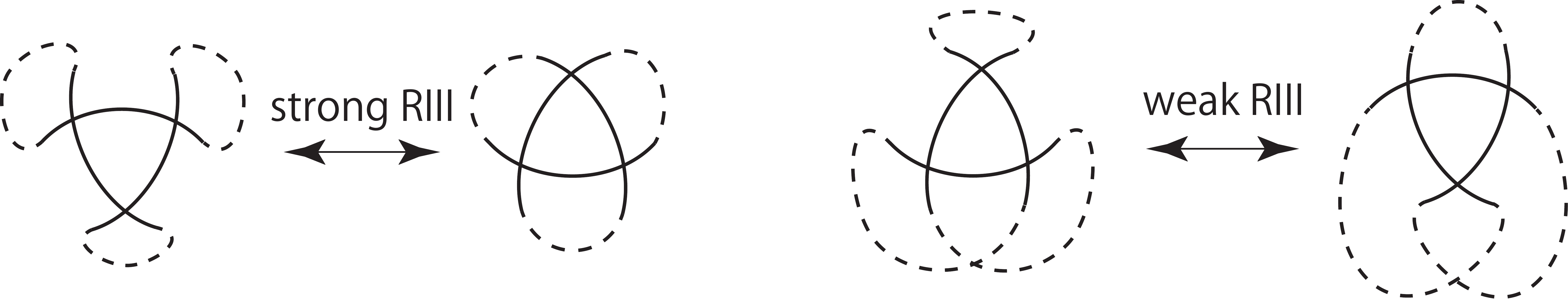}
\caption{Strong (left) and weak (right) third Reidemeister moves.  Dotted arcs indicate the connections of the branches.}\label{ych7}
\end{figure}

Now we state some new results.  
Theorem~\ref{main2_thm} gives a new integer-valued invariant $\lambda(P)$ that provides a complete classification of all prime reduced knot projections with up to seven double points under the equivalence relation induced by RI and strong RI\!I\!I (strong (1, 3) homotopy \cite{ITT}), as shown in Fig.~\ref{hyou5}.  
Throughout this paper, let $x(P)$ be the number of sub-chord diagrams, each of which is a chord diagram $x=\otimes, \tr$, $\h$, $\HH$, or $\thr$ in $CD_P$ of an arbitrary knot projection $P$.  
\begin{theorem}\label{thm_sec4}
The increments or decrements under one first, weak second, strong second, weak third, and strong third Reidemeister moves are shown in Fig.~\ref{ych3}, respectively, where $m$ is an integer.  
\end{theorem}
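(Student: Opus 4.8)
The plan is to exploit the \emph{locality} of Reidemeister moves. Each move is supported in a small disk $D$, and it only creates, destroys, or repositions the double points lying in $D$; outside $D$ the curve, and hence every chord of $CD_P$ disjoint from the affected double points, is untouched. Consequently, writing $P'$ for the result of the move, the difference $x(P')-x(P)$ equals the signed number of type-$x$ sub-chord diagrams that use at least one of the affected chords, computed in $CD_{P'}$ minus the same count in $CD_P$. So the first step is to record, for each of the five moves, exactly which chords are affected and what their mutual position in the chord diagram is: RI adds (or deletes) a single \emph{isolated} chord whose two endpoints are adjacent on the circle; each RII move adds (or deletes) a pair of chords whose mutual crossing/nesting pattern is dictated by the branch connections of Fig.~\ref{ych6}; and each RIII move leaves the number of chords fixed but repositions a pairwise-crossing triple according to Fig.~\ref{ych7}.

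Next I would introduce the integer $m$ as an \emph{external count}. The affected chords cut the circle into arcs, and I let $m$ denote the number of chords having an endpoint in the distinguished arc (equivalently, the number of strands of $P$ running through the relevant part of $D$). Any type-$x$ sub-diagram that contributes to $x(P')-x(P)$ consists of the affected chords together with a subset of these external chords, constrained so that the whole configuration realises the crossing pattern of $x\in\{\otimes,\tr,\h,\HH,\thr\}$. Counting such configurations therefore reduces to counting external chords with a prescribed incidence to the affected chords, which is linear in $m$; this is the source of the $m$-dependent entries in Fig.~\ref{ych3}.

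The third step is the case analysis, move by move and type by type. For RI the new chord is isolated, so it crosses nothing and nothing can be nested against it; since in each of $\otimes,\tr,\h,\HH,\thr$ every chord crosses at least one other chord, the isolated chord can belong to none of them and all five increments vanish. For the two RII moves the new pair $c_1,c_2$ has the fixed pattern of Fig.~\ref{ych6}; I would enumerate, for each target type, how many sub-diagrams can be built from $\{c_1,c_2\}$ together with $0$, $1$, or $2$ external chords, so that the patterns realisable without an external chord give constant increments and those requiring an external chord through the bigon give the $\pm m$ entries. For the two RIII moves the triple $c_1,c_2,c_3$ stays pairwise crossing, so the single sub-diagram equal to the whole triple (a copy of $\tr$) is unaffected; the remaining changes come entirely from sub-diagrams pairing one or two of the $c_i$ with external chords, and I would compare, chord by chord, which of these are present before and after the repositioning prescribed by Fig.~\ref{ych7}.

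The main obstacle is the RIII analysis for the four-chord diagrams $\HH$ and $\thr$. Because an RIII move preserves all three pairwise crossings of the triple but permutes the cyclic positions of their six endpoints relative to the external endpoints, sub-diagrams are simultaneously created and destroyed, so the reported increment is a difference of two $m$-dependent counts rather than a single count. Carrying this out correctly requires keeping the nesting-versus-crossing bookkeeping consistent with the strong/weak branch-connection conventions of Fig.~\ref{ych7}, and then checking that each resulting difference matches, with the correct sign and the correct coefficient of $m$, the corresponding entry of Fig.~\ref{ych3}; this verification across all type/move pairs is where the real work lies.
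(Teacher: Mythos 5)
There are two genuine gaps here. The first is that your framework cannot actually produce the entries of Fig.~\ref{ych3}. Those entries are not linear functions of an external strand count: they are residue statements ($0$, $4m$, $4m-1$, $\pm 3$, $\pm 1$, ``even'', ``odd''), and every one of the mod~$2$ and mod~$4$ claims rests on a parity lemma that your proposal never states. Namely, for any chord $\alpha$ of $CD_P$ the number of chords crossing $\alpha$ is \emph{even}, because smoothing the corresponding double point splits $P$ into two closed components which, being closed curves on the sphere, meet in an even number of points (Figs.~\ref{ych18} and \ref{ych19}); a refinement of the same trick (resolving all three double points of the RI\!I\!I triangle, Fig.~\ref{ych24a}) is what shows that the number of $X$-type external chords in Fig.~\ref{ych22} is even. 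If you merely let $m$ be the number of strands through the disk and observe that the relevant counts are ``linear in $m$'', you get $2m$ where the theorem asserts $4m$, and you get no parity information at all for the $\tr$, $\h$, and $\thr$ columns. You also need the pairing argument (the paper's ``duality $(\alpha,\beta)$''): an external chord crossing one of the two new RI\!I chords must cross the other, so their contributions cancel or double; you allude to the bigon but never use this.

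The second gap is that your RI\!I\!I analysis starts from a false premise: you assert that the triple $c_1,c_2,c_3$ ``stays pairwise crossing'', so that the $\tr$ it forms is unaffected. In fact, since the endpoints of the three chords only move within short arcs containing no external endpoints, external chords do not change their crossings with the $c_i$; hence the entry $\Delta ch(\otimes)=\pm 3$ for strong RI\!I\!I forces all three mutual crossings of the triple to be created or destroyed by the move. Correspondingly the $\tr$ formed by the whole triple appears or disappears, and this is precisely the source of the odd parity of $\Delta ch(\tr)$ under strong RI\!I\!I. Under your premise $\Delta ch(\otimes)$ would be $0$ for strong RI\!I\!I, contradicting the very table you are proving, and the bookkeeping for $\tr$, $\h$, and $\thr$ under both RI\!I\!I moves would come out wrong. (A minor point: Fig.~\ref{ych3} concerns only $\otimes$, $\tr$, $\h$, and $\thr$; the diagram $\HH$ enters only in the flype theorem.)
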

\begin{figure}[h!]
\includegraphics[width=6cm]{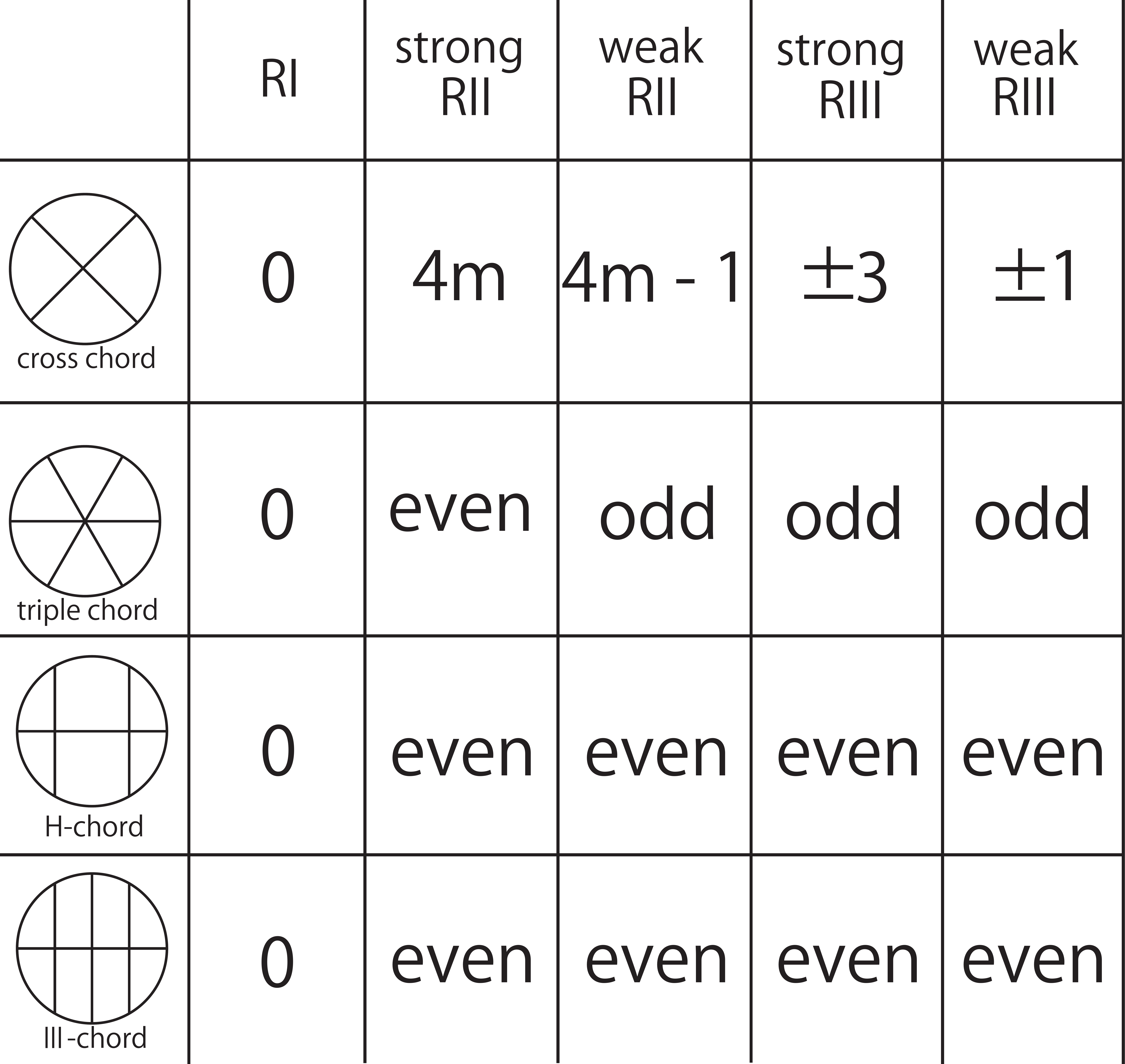}
\caption{Difference under several types of Reidemeister moves.  $m$ is an integer.}\label{ych3}
\end{figure}
Theorem \ref{main2_thm} is based on Theorem \ref{thm_sec4} and the discussion preceding it.
\begin{theorem}\label{main2_thm}
\[[3\h(P)- 3\tr(P)+\otimes(P)]/4\] is an integer $\lambda(P)$ that is invariant under RI and strong RI\!I\!I.   
\end{theorem}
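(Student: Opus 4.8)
The plan is to read both claims—integrality and invariance—directly off the increment table of Theorem~\ref{thm_sec4} (Fig.~\ref{ych3}). Write $F(P)=3\h(P)-3\tr(P)+\otimes(P)$, so that $\lambda(P)=F(P)/4$. For the invariance under RI and strong RI\!I\!I I would extract, from the relevant rows of Fig.~\ref{ych3}, the changes $\Delta\otimes$, $\Delta\tr$, $\Delta\h$ caused by a single such move and check that the linear combination $3\,\Delta\h-3\,\Delta\tr+\Delta\otimes$ vanishes identically. For RI this is immediate: the arc inserted or deleted is a chord crossing no other chord, hence cannot appear in any of the patterns $\otimes$, $\tr$, $\h$ (each of which requires every participating chord to be crossed), so $\Delta\otimes=\Delta\tr=\Delta\h=0$. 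For strong RI\!I\!I the three table entries must be combined, and the content of the claim is that the coefficients $(3,-3,1)$ attached to $(\h,\tr,\otimes)$ are exactly those for which the (possibly $m$-dependent) contributions cancel to $0$.

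For the integrality I would argue modulo $4$. The key point is that, although $F$ is not invariant under the remaining three moves, Theorem~\ref{thm_sec4} shows that under each of weak RI\!I, strong RI\!I, and weak RI\!I\!I the quantity $3\,\Delta\h-3\,\Delta\tr+\Delta\otimes$ is a multiple of $4$. Granting this together with the exact vanishing under RI and strong RI\!I\!I, the residue $F(P)\bmod 4$ is preserved by every Reidemeister move. Since, as recalled in the Introduction, any spherical curve is related to the simple closed curve by a finite sequence of Reidemeister moves, and since the simple closed curve has empty chord diagram so that $\h=\tr=\otimes=0$ and $F=0$, I would conclude that $F(P)\equiv 0\pmod 4$ for every $P$. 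Hence $\lambda(P)=F(P)/4\in\mathbb{Z}$, and by the first paragraph it is unchanged by RI and by strong RI\!I\!I.

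The hard part will be the verification that $3\,\Delta\h-3\,\Delta\tr+\Delta\otimes$ is divisible by $4$ under weak RI\!I, strong RI\!I, and weak RI\!I\!I, and is exactly $0$ under strong RI\!I\!I. The difficulty is that several entries of Fig.~\ref{ych3} depend on the integer $m$—the number of chords whose endpoints separate the branches participating in the move—so each $\Delta x$ is an affine function of $m$, and one must show that the coefficients $(3,-3,1)$ annihilate both the constant part and the $m$-linear part of the combination: exactly for RI and strong RI\!I\!I, and modulo $4$ for the other three moves. Concretely, this reduces to tracking how the chords local to the move, together with the $m$ chords threading the affected regions, enter each of the three patterns $\h$, $\tr$, $\otimes$ before and after the replacement. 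Organizing this case analysis so that every configuration yields the required exact or mod-$4$ cancellation is the crux; once the affine dependence on $m$ is pinned down in each case, the remaining computation is routine.
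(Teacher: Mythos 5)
Your plan coincides with the paper's own proof: Sec.~\ref{sec_main2_thm} shows that $3\,\Delta\h-3\,\Delta\tr+\Delta\otimes$ vanishes exactly under RI and strong RI\!I\!I and is a multiple of $4$ under strong RI\!I (where it is computed to be $12m^2+4m-12l$, so quadratic rather than affine in $m$), weak RI\!I, and weak RI\!I\!I, and then uses connectivity under Reidemeister moves together with $3\h(\bigcircle)-3\tr(\bigcircle)+\otimes(\bigcircle)=0$ to conclude divisibility by $4$, exactly as you propose. The one caveat is that the correlated case analysis you defer as ``the crux''---in particular the $X$-type-chord bookkeeping for strong RI\!I\!I, which cannot be extracted from the congruence-only entries of Fig.~\ref{ych3}---constitutes essentially the whole substance of the paper's argument, so your proposal is a correct outline of the same route rather than a finished proof.
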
  
We remark that there has been no non-trivial integer-valued invariant, such as $\lambda(P)$, under RI and strong RI\!I\!I before.  The invariant $\lambda(P)$ is additive (Proposition~\ref{additive_prop}) and has the following important properties.  

Below, using the invariant $H(P)$ defined in \cite{ITT} and introducing $\widetilde{X}(P)$, we have the following result, when $\lambda(P)=0$ (note that $\lambda(P)=0$ if $\widetilde{X}(P)=0$).  
\begin{theorem}\label{main3}
Let $P$ be an arbitrary knot projection.  
A map $H$ $($resp.,~$\widetilde{X}$$)$ from the set of all the knot projections to $\{0, 1\}$ is defined by the condition $H(P)=0$ $($resp.~$\widetilde{X}(P)=0$$)$ if and only if $\h(P)=0$ $($resp.,~$\otimes(P)=0$$)$.  Then $H(P)$ $($resp.,~$\widetilde{X}(P)$$)$ is invariant under RI and strong RI\!I\!I $($resp.,~weak RI\!I\!I$)$  
and we have the following four necessary and sufficient conditions:  
\begin{align*}
H(P)=0~{\text{and}}~\lambda(P)=0 &\Leftrightarrow {\text{$P$ can be related to $\bigcircle$ by using RI and strong RI\!I\!I,}} \\
\tr(P)=0~{\text{and}}~\lambda(P)=0 &\Leftrightarrow {\text{$P$ can be related to $\bigcircle$ by using RI,}} \\
\widetilde{X}(P)=0 &\Leftrightarrow {\text{$P$ can be related to $\bigcircle$ by using RI and weak RI\!I\!I,}}~{\text{and}}~ \\
\widetilde{X}(P)=0 &\Leftrightarrow {\text{$P$ can be related to $\bigcircle$ by using RI}} \\
\end{align*}
where $\bigcircle$ denotes a simple closed curve on a sphere.    
\end{theorem}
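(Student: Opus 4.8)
The plan is to split Theorem~\ref{main3} into three parts: the invariance assertions, the four ``necessity'' directions (a reduction $P\to\bigcircle$ forces the displayed quantity to vanish), and the four ``sufficiency'' directions (vanishing produces an explicit reduction). The necessity directions are uniform and follow at once from invariance: the chord diagram of $\bigcircle$ is empty, so $\otimes(\bigcircle)=\tr(\bigcircle)=\h(\bigcircle)=0$ and hence $H(\bigcircle)=\widetilde{X}(\bigcircle)=\lambda(\bigcircle)=0$; if $P$ is carried to $\bigcircle$ by moves under which the relevant quantity is invariant, that quantity equals its value on $\bigcircle$, namely $0$. So the real content is (i) invariance and (ii) sufficiency.

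For the invariance assertions I would read the increments off Fig.~\ref{ych3} (Theorem~\ref{thm_sec4}). Since $H$ and $\widetilde{X}$ take values in $\{0,1\}$ and only record whether $\h(P)$, respectively $\otimes(P)$, is $0$, it suffices that the \emph{properties} ``$\h(P)=0$'' and ``$\otimes(P)=0$'' be preserved under the relevant moves, which is weaker than invariance of the counts themselves. An RI move inserts a chord whose two endpoints are adjacent on the circle and therefore crosses nothing, so it changes none of $\otimes,\tr,\h$; and the increment table shows that a strong RI\!I\!I move cannot change whether $\h$ vanishes, while a weak RI\!I\!I move cannot change whether $\otimes$ vanishes. (In the degenerate case $\otimes(P)=0$ this is transparent, since no RI\!I\!I move is applicable without three pairwise-crossing chords.) Invariance of $\lambda$ is Theorem~\ref{main2_thm}.

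The heart of sufficiency is a single lemma: if $\otimes(P)=0$ then $P$ can be reduced to $\bigcircle$ by RI alone. I would prove it by induction on the number of chords. If no two chords cross, then some chord has its two feet adjacent on the circle (no other endpoint lies between them on one of the two arcs); such a chord is the diagram of an RI loop, removing it is an RI move, and the smaller diagram still has $\otimes=0$. This lemma settles three of the four equivalences. Using $4\lambda=3\h-3\tr+\otimes$ together with $\h,\tr,\otimes\ge0$, the hypothesis $\tr(P)=0$ and $\lambda(P)=0$ forces $3\h(P)+\otimes(P)=0$, hence $\otimes(P)=0$, which gives the second equivalence; and $\widetilde{X}(P)=0$ means exactly $\otimes(P)=0$, which gives the third and fourth equivalences, since a reduction by RI is a fortiori a reduction by RI and weak RI\!I\!I.

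The remaining and genuinely hard case is sufficiency of the first equivalence: $H(P)=0$ and $\lambda(P)=0$ should yield a reduction to $\bigcircle$ by RI and strong RI\!I\!I, where now $\otimes(P)$ need not vanish and strong RI\!I\!I is unavoidable. Here I would use the reformulation that $\h(P)$ counts three-chord configurations in which one chord crosses two mutually non-crossing chords; thus $H(P)=0$ says the crossing graph of $CD_P$ (vertices the chords, edges the crossing pairs) has no induced path on three vertices, i.e.\ is a disjoint union of cliques, and on such diagrams $4\lambda(P)=\sum_i\binom{k_i}{2}(3-k_i)$ with $k_i$ the clique sizes. The plan is then to induct on the number of double points: a clique of size $1$ is an RI loop and is removed, and otherwise one produces a removable configuration by a strong RI\!I\!I move while keeping $H$ and $\lambda$ equal to $0$, the balance condition $\sum_i\binom{k_i}{2}(3-k_i)=0$ being exactly what should forbid the process from stalling at a nontrivial cluster. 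I expect this termination step—showing that $(H,\lambda)=(0,0)$ leaves no equivalence class other than that of $\bigcircle$—to be the main obstacle, and I would discharge it by appealing to the complete classification of prime reduced projections under RI and strong RI\!I\!I (Theorem~\ref{main2_thm}, Fig.~\ref{hyou5}) and to the properties of $H$ from \cite{ITT}.
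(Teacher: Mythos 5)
Your handling of the necessity directions, of the key lemma that $\otimes(P)=0$ permits a reduction by RI alone, and of the derivation of the second, third, and fourth equivalences from that lemma all match the paper. There are, however, two genuine gaps. The smaller one concerns the invariance of $H$ under strong RI\!I\!I: the increment table only records that $\h(P)$ changes by an even number, and an even increment can carry $\h(P)=0$ to $\h(P)=2$, so ``the increment table shows that a strong RI\!I\!I move cannot change whether $\h$ vanishes'' is not a proof. The paper supplies a separate argument in Sec.~\ref{sec3_main3}: $\h(P)=0$ on either side of the move is equivalent to the absence of $X$-type chords (Fig.~\ref{ych22}) together with the absence of an $\h$ formed by three non-RI\!I\!I chords, and this characterization is symmetric in the two sides.

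The larger gap is the sufficiency of the first equivalence, which you correctly identify as the hard case but do not close. Your reformulation of $H(P)=0$ as ``the interlacement graph of $CD_P$ is a disjoint union of cliques'' and the identity $4\lambda(P)=\sum_i\binom{k_i}{2}(3-k_i)$ are both correct, but the balance condition $\sum_i\binom{k_i}{2}(3-k_i)=0$ does \emph{not} by itself forbid large cliques: each $2$-clique contributes $+1$, so positive terms could cancel the negative contributions of cliques of size at least $4$. What excludes this is realizability, and that is precisely the external input the paper uses: the Sakamoto--Taniyama theorem (Fact~\ref{st_thm}, \cite[Theorem 3.2]{ST}) states that $\h(P)=0$ forces $P$ itself (not merely its chord diagram) to be a connected sum of $\bigcircle$, $\infty$, and the $(2,2i+1)$-torus projections $P_i$, whose cliques all have odd size $2i+1$; then additivity of $\lambda$ (Proposition~\ref{additive_prop}) and the computation $4\lambda(P_i)=\binom{2i+1}{2}-3\binom{2i+1}{3}$, which is strictly negative for $i\ge 2$ (e.g.\ $\lambda(5_1)=-5$), force every summand to lie in $\{\bigcircle,\infty,P_1\}$, after which the reduction of $P_1$ by RI and strong RI\!I\!I is Fact~\ref{itt_thm} from \cite{ITT}. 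Your proposed fallback---appealing to the classification in Theorem~\ref{main2_thm} and Fig.~\ref{hyou5}---cannot substitute for this, since that table only covers projections with at most seven double points, and a clique decomposition of the chord diagram does not by itself tell you that a strong RI\!I\!I is actually applicable to the underlying curve; without the structure theorem for curves with $\h(P)=0$, your induction can neither start nor terminate for general $P$.
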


We also focus on sub-chord diagrams $\otimes$, $\tr$, $\h$, $\thr$, and $\HH$, each of which has an invariance property under fylpes, where a flype is a local replacement on knot projections as shown in Fig.~\ref{flyped}.
\begin{figure}[h!]
\includegraphics[width=6cm]{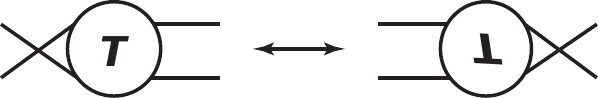}
\caption{Flype.}\label{flyped}
\end{figure}
\begin{theorem}\label{flype_thm}
$\otimes(P)$, $\tr(P)$, $\h(P)$, $\thr(P)$, and $\HH(P)$ are invariant under any flype.  
\end{theorem}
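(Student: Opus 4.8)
The plan is to determine exactly how one flype alters the chord diagram $CD_P$ and then to match the sub-chord diagrams of each prescribed type before and after the move. Write $P'$ for the curve obtained from $P$ by the flype of Fig.~\ref{flyped}. The flype is supported in a disk $D$ on the sphere meeting $P$ transversally in four points: inside $D$ lies a tangle $T$, and the move rotates $T$ by a half-turn while carrying the distinguished double point of the flype from one side of $T$ to the other. The four points of $\partial D\cap P$ determine four points on the circle underlying $CD_P$, dividing it into two inner arcs (the two strands of $T$) and two outer arcs (the rest of $P$). Since no double point lies on $\partial D$, each chord of $CD_P$ has either both endpoints on the inner arcs or both on the outer arcs, and I single out the chord $\gamma$ of the double point that is transported. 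I would first record that the outer chords are untouched by the move, while the half-turn acts on the inner chords as a reflection of the cyclic sequence of their endpoints.

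The first ingredient is the elementary observation that a reflection (a reversal of cyclic order) of a family of chord endpoints preserves, for every pair of chords, which of the relations \emph{crossing}, \emph{nested}, or \emph{disjoint} holds, because each is determined by the interleaving pattern of four points and interleaving is invariant under reversal. Hence the half-turn induces a relation-preserving bijection on the inner chords, and outer chords keep all their relations as well. I stress, however, that this does \emph{not} give a global relation-preserving bijection on all chords: the relations between an inner and an outer chord, and those involving $\gamma$, can genuinely change when $T$ is reflected and the distinguished double point is transported. This is precisely why the five listed diagrams, rather than arbitrary sub-chord diagrams, are the objects that survive.

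The core of the proof is therefore a per-pattern matching. For each $x\in\{\otimes,\tr,\h,\thr,\HH\}$ I would partition the sub-chord diagrams of type $x$ in $CD_P$ according to how their chords meet the inner arcs, the outer arcs, and $\gamma$. Those using only outer chords are fixed; those using only inner chords are carried to type-$x$ diagrams by the reflection bijection, using that each of these five diagrams is symmetric under reflection and is determined up to isomorphism by the crossing/nesting/disjointness relations among its chords ($\otimes$ is a crossing pair, $\tr$ a pairwise-crossing triple, $\h$ a nested pair both crossed by a third chord, $\thr$ a nested triple crossed by a common chord, and $\HH$ a disjoint pair and a nested pair joined by a complete crossing pattern). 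It remains to match the mixed sub-diagrams, those using at least one inner and one outer chord or using $\gamma$; here I would use the relations that $\gamma$ actually retains, together with the fact that an inner chord and an outer chord can interleave only when each \emph{spans} the tangle boundary, the cyclic positions of the four boundary points being unchanged by the move.

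The main obstacle is exactly this mixed case, driven by the distinguished chord $\gamma$ and by any spanning chord --- an inner chord joining the two inner arcs, or an outer chord joining the two outer arcs --- since these are the only chords whose interleaving can change under the flype. I expect the resolution to be a finite check organized by which of the four arcs the endpoints of a pattern's chords occupy: one shows that, for each of the five rigid patterns, the spanning and distinguished chords can participate only in configurations that the flype sends to configurations of the same type, so that the mixed type-$x$ diagrams of $P$ and of $P'$ are in bijection. Combining the fixed, reflected, and mixed contributions gives $x(P)=x(P')$ for each of the five diagrams, which is the assertion of Theorem~\ref{flype_thm}.
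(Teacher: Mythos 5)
Your setup is essentially the paper's: you classify sub-chord diagrams by how their chords are distributed between the tangle and its complement, you isolate the transported chord (your $\gamma$, the paper's $Q$) as the only source of change, and you reduce to a finite per-pattern check. The paper does exactly this, organizing the check by ``$y$-chord cases'' ($y$ chords of the pattern inside the shaded tangle) for each of $\otimes$, $\tr$, $\h$, $\thr$, $\HH$. Your observation that the half-turn acts on the inner chords as an interleaving-preserving reflection is a cleaner justification of the paper's remark $(\diamondsuit)$ than the paper itself gives. However, there are two genuine gaps. First, the substantive content of the theorem is precisely the finite check you defer with ``I expect the resolution to be a finite check \dots one shows that \dots'': for each of the five patterns and each distribution of its chords over the four arcs, one must verify that configurations containing $\gamma$ are matched bijectively before and after the move. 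None of these verifications is carried out, and they are not automatic --- for instance, for $\h$ one must separately treat $\gamma$ as the sticking chord and as a parallel chord, and for $\thr$ and $\HH$ several a priori possible distributions must be ruled out using the condition that no chord joins the tangle to its complement. The theorem is not proved until this table of cases is filled in.

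Second, before the finite check can even be set up one must enumerate how the four tangle endpoints are joined by the two inner strands and by the exterior, since this determines whether the two inner arcs alternate with the two outer arcs on the circle of $CD_P$ or are adjacent to them; the crossing pattern between a spanning inner chord, a spanning outer chord, and $\gamma$ depends on which alternative holds. The paper lists six connection patterns and reduces them to two essentially distinct ones (its Cases 1 and 5), and the per-pattern check must be done in each. Your write-up tacitly assumes a single arc configuration. A minor further point: your worry that the crossing relation between an inner and an outer chord ``can genuinely change'' is unfounded --- whether two such chords interleave is determined only by which of the four arcs carry their endpoints, and this is unchanged by the flype; the only relations that change are those involving $\gamma$. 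Tightening this would actually simplify your mixed case, but it does not remove the need to perform the check.
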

\begin{corollary}
$\lambda(P)$ is invariant under any flype.  
\end{corollary}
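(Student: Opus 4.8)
The plan is to derive the corollary directly from the two preceding theorems, since $\lambda(P)$ is by construction a fixed linear combination of sub-chord-diagram counts that are each already known to be flype-invariant. Concretely, Theorem~\ref{main2_thm} gives the closed formula $\lambda(P) = [3\h(P) - 3\tr(P) + \otimes(P)]/4$, so $\lambda$ depends on the knot projection $P$ only through the three integers $\h(P)$, $\tr(P)$, and $\otimes(P)$. The entire task therefore reduces to propagating the flype-invariance of these three counts through this expression.

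First I would fix an arbitrary knot projection $P$ and a knot projection $P'$ obtained from $P$ by a single flype as in Fig.~\ref{flyped}. By Theorem~\ref{flype_thm}, each of the counts $\otimes$, $\tr$, and $\h$ is unchanged, i.e.\ $\otimes(P')=\otimes(P)$, $\tr(P')=\tr(P)$, and $\h(P')=\h(P)$ (the analogous equalities for $\thr$ and $\HH$ hold as well but are not needed here). Substituting these equalities into the formula of Theorem~\ref{main2_thm} gives $\lambda(P') = [3\h(P') - 3\tr(P') + \otimes(P')]/4 = [3\h(P) - 3\tr(P) + \otimes(P)]/4 = \lambda(P)$. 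Since a general flype is, by definition, a single such local replacement, this establishes the claim, and invariance under one flype propagates to any finite sequence of flypes.

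The hard part is not in the corollary itself but has already been discharged in Theorem~\ref{flype_thm}; at the level of the corollary there is essentially no obstacle beyond bookkeeping. The only point worth checking explicitly is that the division by $4$ causes no difficulty: Theorem~\ref{main2_thm} already asserts that $\lambda(P)$ is an integer, and since the numerator $3\h(P) - 3\tr(P) + \otimes(P)$ is literally unchanged by the flype, the quotient is unchanged as an integer, with no question of integrality or of the choice of representative arising.
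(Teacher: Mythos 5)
Your proposal is correct and is exactly the intended argument: the paper states this as an immediate corollary of Theorem~\ref{flype_thm} without further proof, since $\lambda(P)=[3\h(P)-3\tr(P)+\otimes(P)]/4$ depends only on the three counts $\h(P)$, $\tr(P)$, $\otimes(P)$, each of which Theorem~\ref{flype_thm} shows is preserved by a flype. Your extra remark that integrality of the quotient is inherited from Theorem~\ref{main2_thm} is a harmless and accurate bit of bookkeeping.
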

The remainder of this paper contains the following sections.  Theorem~\ref{thm_sec4}, \ref{main2_thm}, \ref{main3}, and \ref{flype_thm} are proved in Secs.~\ref{sec4}, \ref{sec_main2_thm}, \ref{sec3_main3} and \ref{sec_flype}, respectively.  Sec.~\ref{flype_thm} also mentions properties of $\lambda$, and Sec.~\ref{average_sec} comments on the behavior of Averaged invariant $-(J^{+} + 2 St)/2$ consisting of Arnold's invariants $J^{+}$ and $St$.  Finally, we present a table of prime reduced knot projections with up to seven double points, counting each type of sub-chord diagram embedded in a chord diagram of each knot projection.  

\section{Proof of Theorem \ref{thm_sec4}.}\label{sec4}
\begin{proof}
The sub-chord diagrams $\otimes$, $\tr$, $\h$, and $\thr$ are called a cross chord, a triple chord, an H-chord, and a I\!I\!I-chord, respectively.  Throughout this proof, $\Delta ch$ and $\Delta ch(x)$ are the difference of two numbers of embedded sub-chord diagrams and of embedded sub-chord diagrams specified by $x$, respectively, under a single local move that we focus on, where $x$ $=$ $\otimes$, $\tr$, $\h$, and $\thr$.  
\begin{enumerate}
\item RI (Fig.~\ref{ych16}).  Consider the first column of the table in Fig.~\ref{ych3}.  Move RI does not affect $\otimes(P)$, $\tr(P)$, $\h(P)$, or $\thr(P)$ for an arbitrary knot projection $P$.  
This implies that $\Delta ch = 0$ for every box in the first column.  
\begin{figure}
\includegraphics[width=5cm]{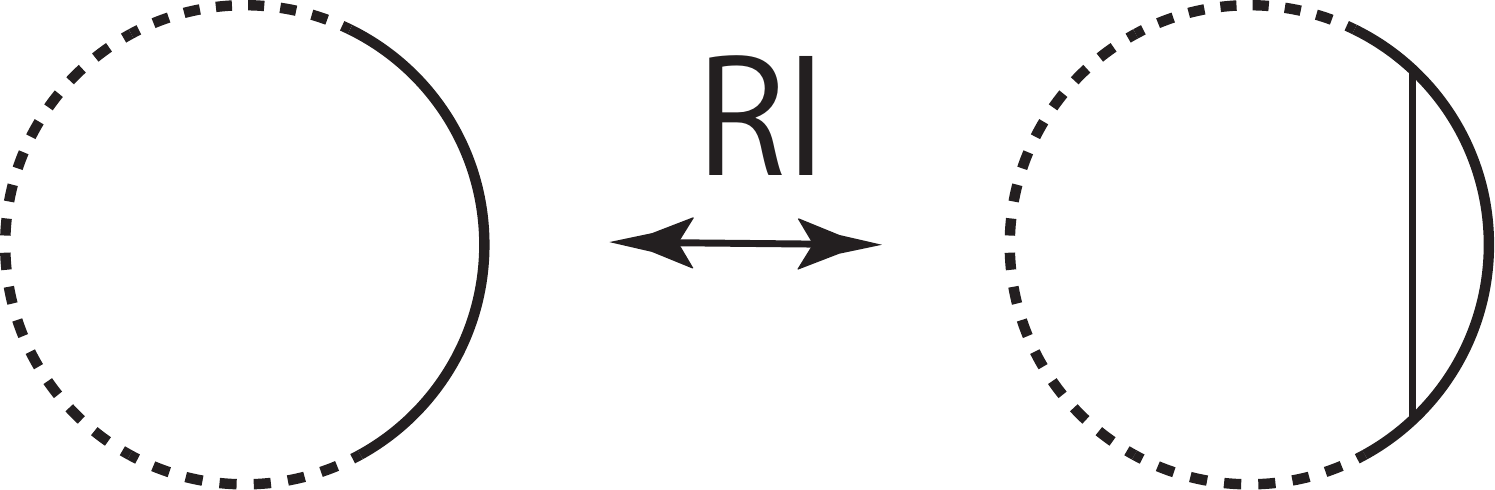}
\caption{RI on chord diagrams.}\label{ych16}
\end{figure}
\item Strong RI\!I (Fig.~\ref{ych17}).  Consider the second column of the table in Fig.~\ref{ych3}.  We separate the cases based on the number of chords that relate to a strong~RI\!I and belong to the new chord from the left to the right in Fig.~\ref{ych17}.  
\begin{figure}
\includegraphics[width=5cm]{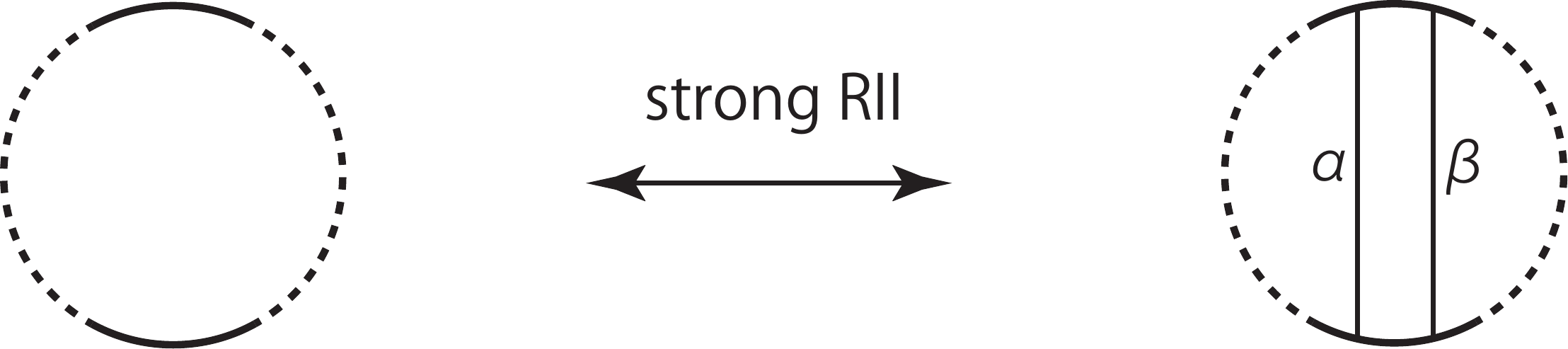}
\caption{Strong RI\!I on chord diagrams.}\label{ych17}
\end{figure}
\begin{itemize}
\item $\otimes$.  Any cross chord as a sub-chord cannot contain both chords $\alpha$ and $\beta$ in Fig.~\ref{ych17}.  Additionally, a chord crossing $\alpha$ (resp.,~$\beta$) should also cross $\beta$ (resp.,~$\alpha$).  We will call this type of the argument {\it{the duality $(\alpha, \beta)$}}.  Thus, the difference in $\otimes(P)$ of a knot projection $P$ by one strong RI\!I should be even.   Moreover, $\Delta ch(\otimes) =4m~(m \in \mathbb{Z})$.  The reason is as follows.  

See Fig.~\ref{ych18}.  For any knot projection $P$, if $P$ has $\alpha$ on the right of Fig.~\ref{ych18}, then the number of chords of $CD_P$ crossing $\alpha$ of $CD_P$ in the left figure of Fig.~\ref{ych18} is even.  This is because for two component spherical curves, if a component of an immersed spherical curve intersects another component, the number of double points consisting of two components is even (Fig.~\ref{ych19}).   
\begin{figure}[h!]
\includegraphics[width=5cm]{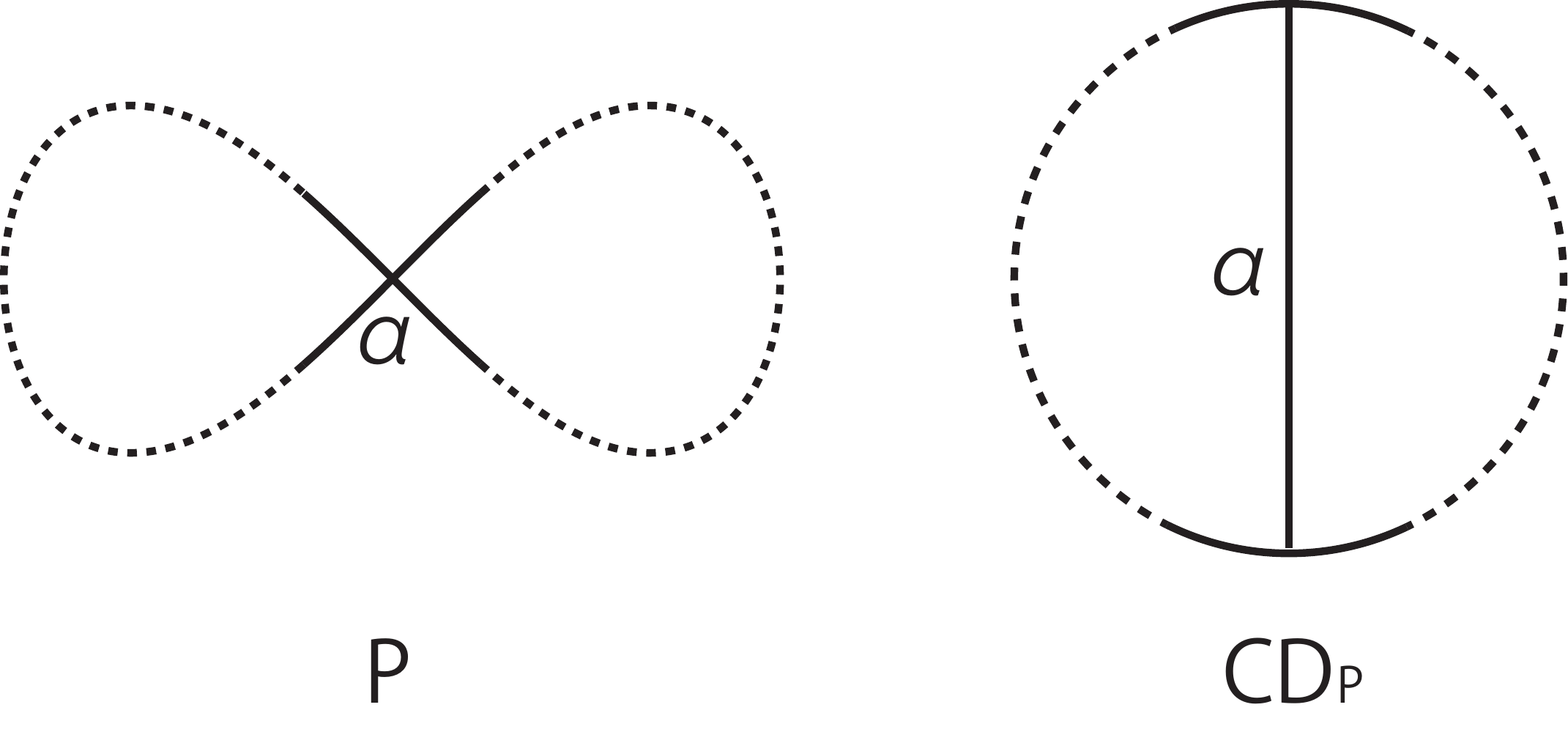}
\caption{Correspondence between a double point of a knot projection $P$ and a chord of chord diagram $CD_P$.}\label{ych18}
\end{figure}
\begin{figure}[h!]
\includegraphics[width=5cm]{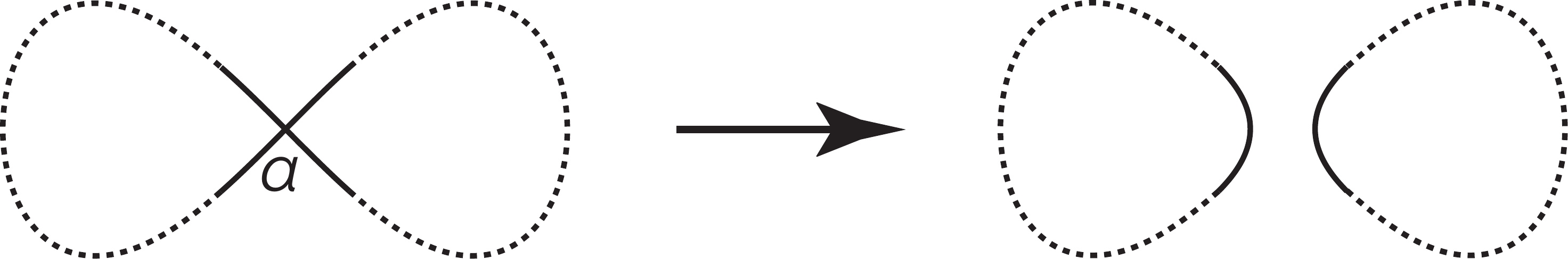}
\caption{Smoothing at a double point.  The number of intersections of two dotted knot projections is even.}\label{ych19}
\end{figure}
By referring either $\alpha$ or $\beta$ to $\alpha$ of Fig.~\ref{ych18}, $\Delta ch(\otimes)=4m~(m \in\mathbb{Z})$.  
\item $\tr$, $\h$, and $\thr$.  Since one strong RI\!I consists of two RIs, a strong RI\!I\!I, and a weak RI\!I, then $\Delta ch (\tr)=$ even, $\Delta ch(\h)=$ even, and $\Delta ch (\thr)=$ even by using results for RI, strong RI\!I\!I, and weak RI\!I. 
\end{itemize}

\item Weak RI\!I (Fig.~\ref{ych20}).  Consider the third column of the table in Fig.~\ref{ych3}.  
\begin{figure}[h!]
\includegraphics[width=5cm]{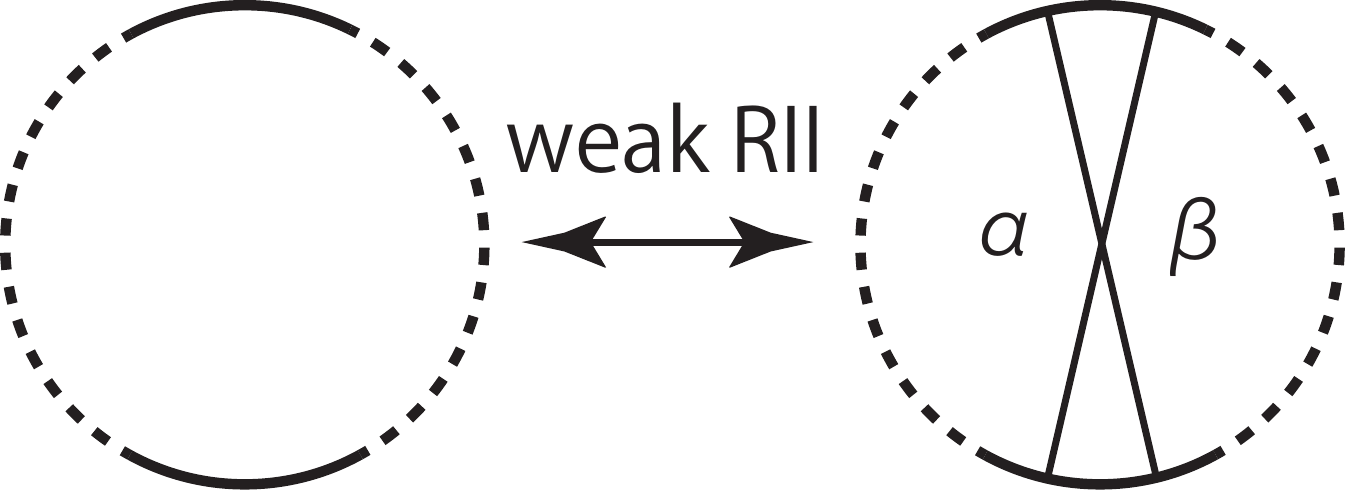}
\caption{Weak RI\!I on chord diagrams.}\label{ych20}
\end{figure}
\begin{itemize}
\item $\otimes$.  Let $\alpha$ and $\beta$ be chords specified in Fig.~\ref{ych20}.  If $\otimes$ consists of $\alpha$ (resp.,~$\beta$) and the other chord $e$ is not $\beta$ (resp.,~$\alpha$), there exists another $\otimes$ consisting of $\beta$ (resp.,~$\alpha$) and $e$ (the argument of the duality $(\alpha, \beta)$).  The number of the sub-chord $\otimes$ consisting of $\alpha$ and $\beta$ is one.  Then, $\Delta ch(\otimes)$ is odd.  Moreover, $\Delta ch (\otimes)= 2(2m-1) + 1$ $=$ $4m-1~(m \in \mathbb{Z}_{\ge 1})$ by the argument regarding Figs.~\ref{ych18} and \ref{ych19}.  
\item $\tr$.  We split the cases by how many increased chords belong to the new $\tr$ from the left to the right in Fig. ~\ref{ych20}.  
\begin{itemize}
\item (one chord in the new.) By the duality $(\alpha, \beta)$, the contribution to the difference is an even number of chords.  
\item (two chords in the new.)  
The number of chords such as $A$ shown in Fig.~\ref{ych20a} is odd 
by the argument regarding Figs.~\ref{ych18} and \ref{ych19} (cf., case $\otimes$ of strong RI\!I).  
\begin{figure}[h!]
\includegraphics[width=2cm]{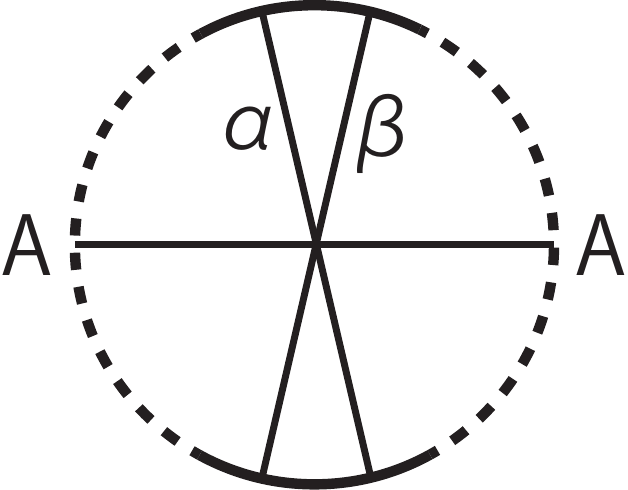}
\caption{Appearance of chord $A$ on the right of Fig.~\ref{ych20} under weak RI\!I.}\label{ych20a}
\end{figure}
\end{itemize}
\item $\h$.  There is no possibility that both $\alpha$ and $\beta$ are contained in the new $\h$ from the left to the right in Fig.~\ref{ych20}.  Then we consider the duality $(\alpha, \beta)$, which implies $\Delta ch(\h)=$ even.  
\item $\thr$.  This case is very similar to the above $\h$ case, $\Delta ch(\thr)=$ even.  
\end{itemize} 
\item Strong RI\!I\!I (Fig.~\ref{ych21}).  Consider the fourth column of the table in Fig.~\ref{ych3}.  
\begin{figure}[h!]
\includegraphics[width=5cm]{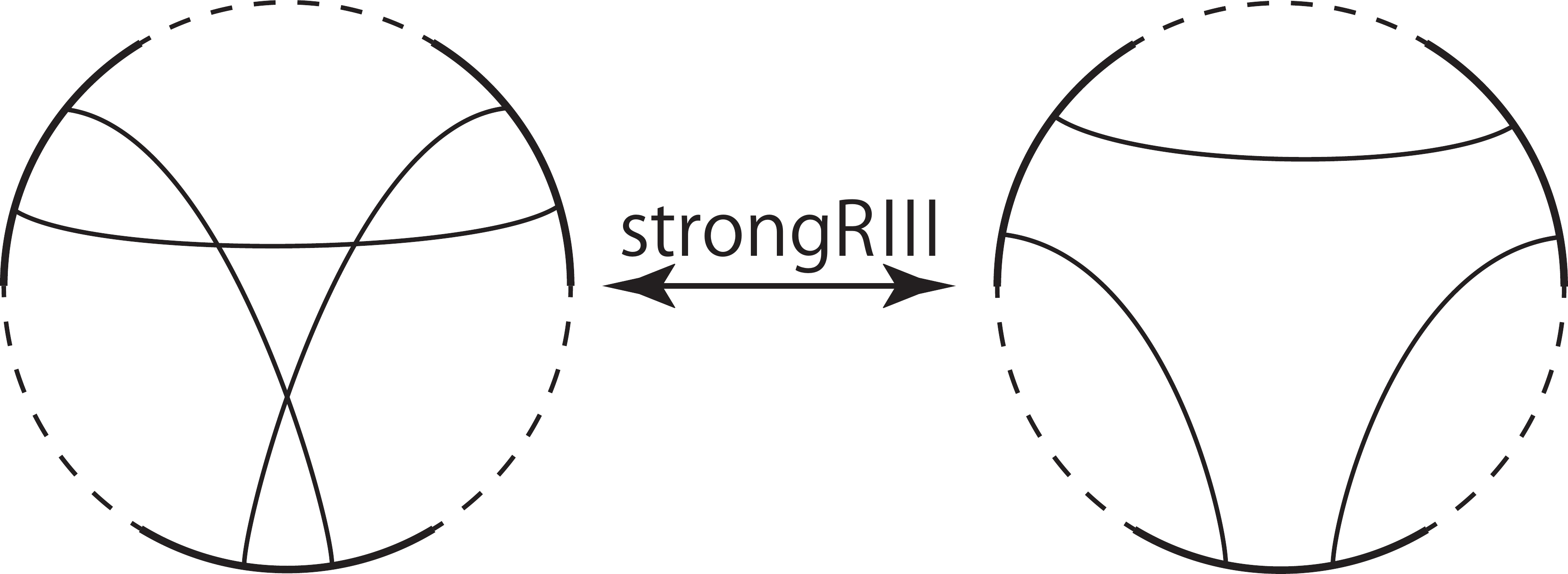}
\caption{Strong RI\!I\!I on chord diagrams.}\label{ych21}
\end{figure}
\begin{itemize}
\item $\otimes$.  Fig.~\ref{ych21} directly implies $\Delta ch(\otimes)=\pm 3$ (as in \cite{ITT}).  
\item $\tr$.  We split the cases by the number of chords that relate to the strong RI\!I\!I and belong to the new $\tr$ from right to left in Fig.~\ref{ych21}.  
\begin{itemize}
\item (one chord related to the new).  By Fig.~\ref{ych21}, $\Delta ch(\tr)=0$ in this case.  
\item (two chords related to the new).  
In this case, for each chord $X$ shown in Fig.~\ref{ych22}, the difference is one from left to right in Fig.~\ref{ych22} (using symmetry, it is sufficient to consider Fig.~\ref{ych22}).  
\begin{figure}[h!]
\includegraphics[width=5cm]{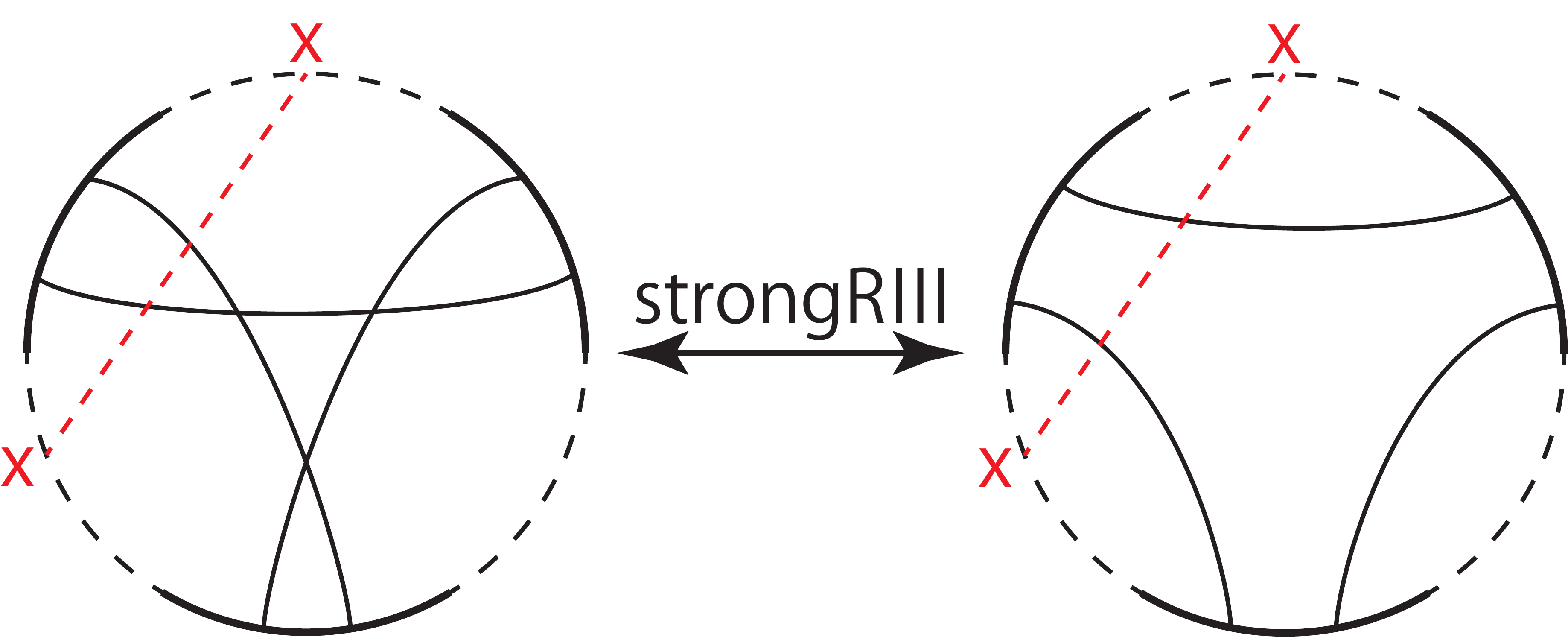}
\caption{Chord $X$ appearing in strong RI\!I\!I.}\label{ych22}
\end{figure}



We show that the number of such $X$ is even, as follows.  First, we apply the same argument as for Figs.~\ref{ych18} and \ref{ych19} to the leftmost figure of Fig.~\ref{ych7}.  The operation illustrated in Fig.~\ref{ych24a} is useful for showing this.  
\begin{figure}[h!]
\includegraphics[width=5cm]{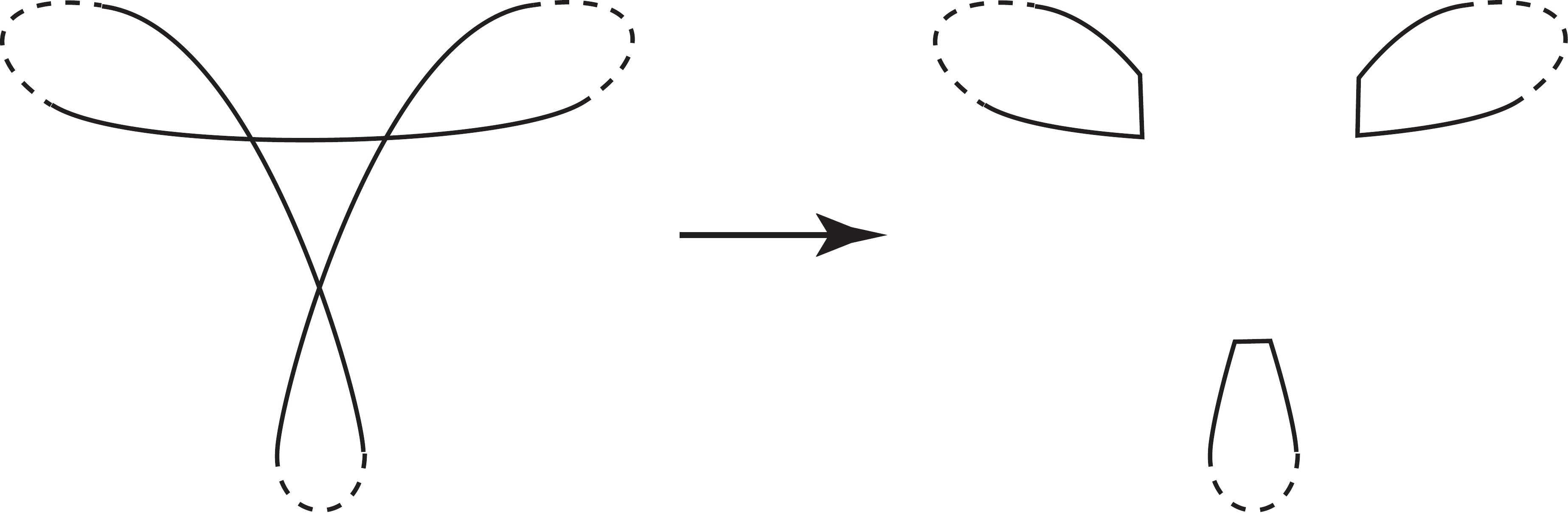}
\caption{Resolutions of three double points.}\label{ych24a}
\end{figure}
After the operation illustrated in Fig.~\ref{ych24a}, two of three components intersect at even double points, which implies the number of chords such as $X$ is even.  
Since each $X$ produces the difference $\pm 1$, the difference is even in this case.  
\item (three chords related to the new).  Only one $\tr$ belongs to the new $\tr$.  
\end{itemize}
As a result, $\Delta ch(\tr)=$ odd.  
\item $\h$.  We separate the cases based on the number of chords,  in $\h$,  that related to the difference between the left and the right of Fig.~\ref{ych21}.  
\begin{itemize}
\item (one chord related to the difference) By Fig.~\ref{ych21}, $\Delta ch (\h)=0$.  
\item (two chords related to the difference)  The discussion is very similar to the case of $\tr$.  Consider Fig.~\ref{ych22}.  For each $X$, the difference in the number of H-chords is $\pm 1$ (from two H-chords (left) to one H-chord (right) in Fig.~\ref{ych22}).  The number of such $X$ is even by the fact that we showed in the case of $\tr$.  As a result, $\Delta ch(\h)=$ even.  
\item (three chords related to the difference) In this case, there is no difference contributing to $\Delta ch (\h)$ and $\Delta ch (\h)=0$.  
\end{itemize}
\item $\thr$.  We divide the cases by how many chords in $\thr$ relate to the difference under one strong RI\!I\!I.  
\begin{itemize}
\item (one chord related to the difference) In this case, the number of $I\!I\!I$-chords does not change under strong RI\!I\!I.  
\item (two chords related to the difference) Consider Fig.~\ref{ych24}.  
\begin{figure}[h!]
\includegraphics[width=5cm]{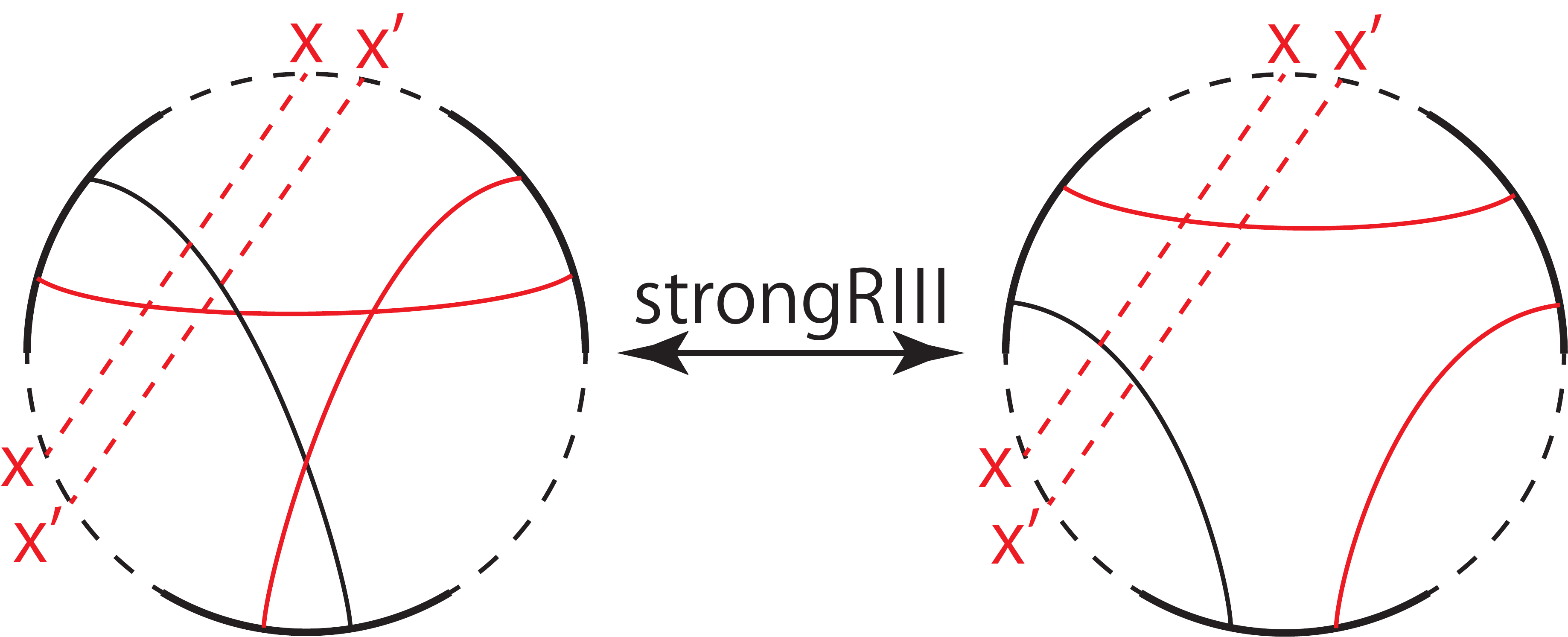}
\caption{Appearance of two parallel chords $X$ and $X'$ under strong RI\!I\!I.  Two I\!I\!I-chords, both including $X$ and $X'$, are contained on the left hand side.}\label{ych24}
\end{figure}
As in Fig.~\ref{ych24}, the difference in the number of $I\!I\!I$-chords is $\pm 2$ for each pair $(X, X')$.  
Therefore, for all pairs matching $(X, X')$, the difference of the sum is even.  

We consider another type of possibility, as shown in Fig.~\ref{ych30}.  
\begin{figure}[h!]
\includegraphics[width=5cm]{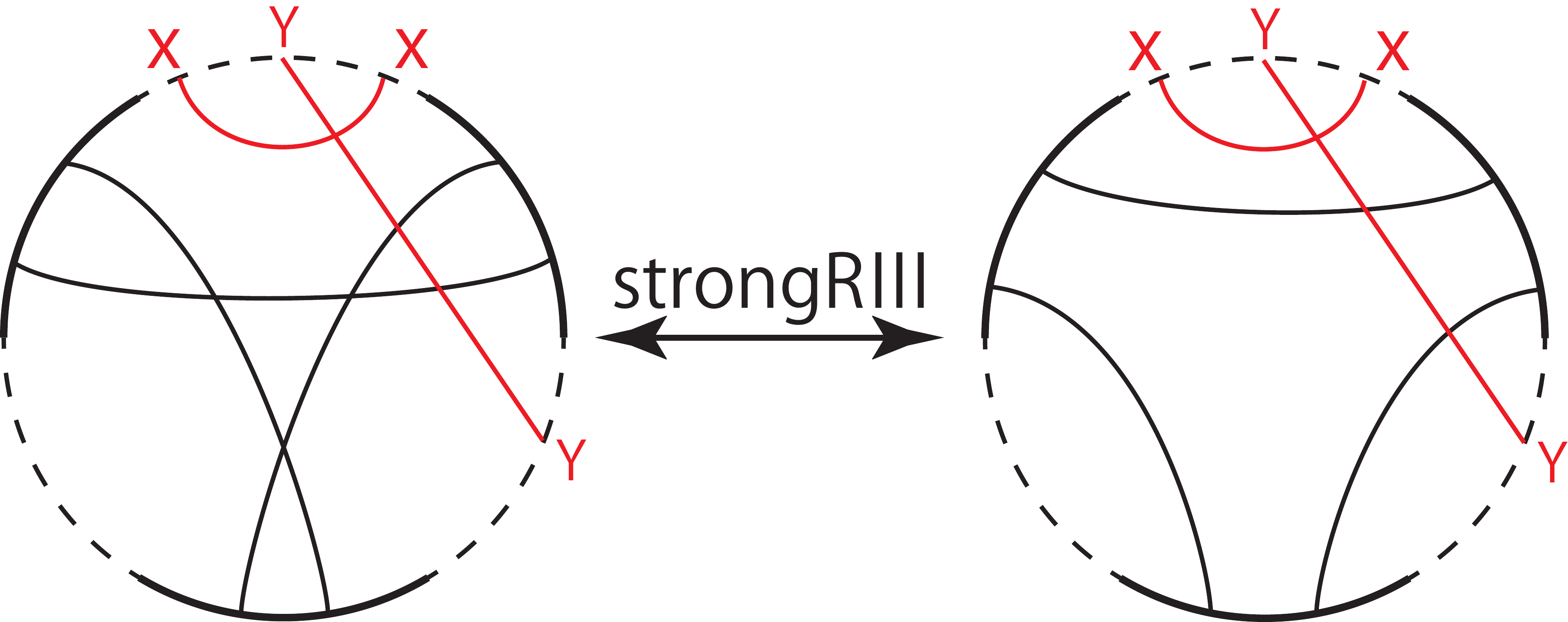}
\caption{Appearance of $X$-type and $Y$-type chords under strong RI\!I\!I.  A I\!I\!I-chord including $X$ and $Y$ is contained on the right hand side.}\label{ych30}
\end{figure}
For each chord $X$, using Figs.~\ref{ych18} and \ref{ych19} that is frequently used above, the number of $Y$-type chords shown in Fig.~\ref{ych30} is even.  
The detailed explanation is as follows.  See Fig.~\ref{abc}.  
First, apply the operation shown in Fig.~\ref{ych24a} to the considered diagrams.  Second, select the curve $C_1$ containing the chord $X$.  Third, in the curve $C_1$, apply the operation shown in Fig.~\ref{ych19} to the double point corresponding to $X$.  Now we have four component curves, of which, two curves intersect at even double points.  This is why the number of $Y$-type chords is even.  
\begin{figure}[h!]
\includegraphics[width=8cm]{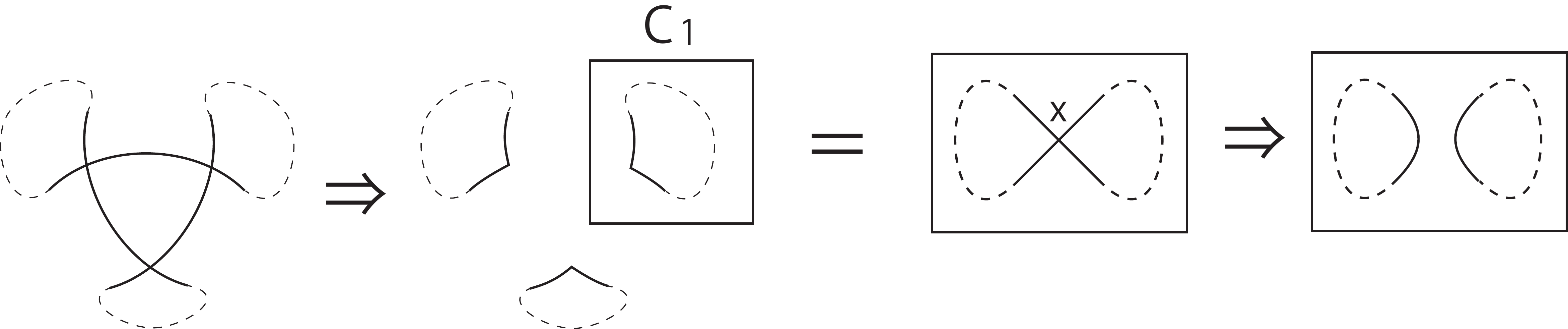}
\caption{The number of $Y$-type chords is even.}\label{abc}
\end{figure}
Then, for each such $X$, the difference is even.  Therefore, the difference of the sum $\Delta ch (\thr)$ is even.  
\item (three chords related to the difference) There is no possible case.  
\end{itemize}
As a result, $\Delta ch(\thr)=$ even.  
\end{itemize}
\item Weak RI\!I\!I (Fig.~\ref{ych25}).  Finally, count the difference in $\Delta ch$ under one weak RI\!I\!I.  
\begin{figure}[h!]
\includegraphics[width=5cm]{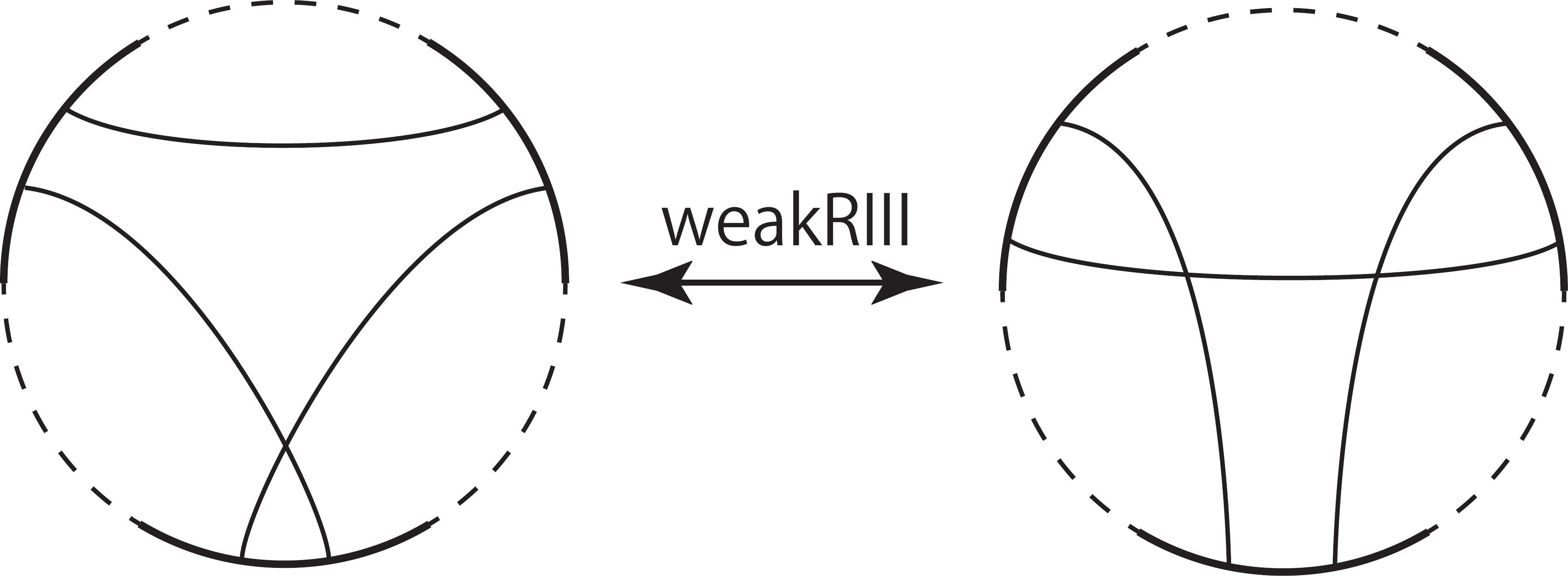}
\caption{Weak RI\!I\!I on chord diagrams.}\label{ych25}
\end{figure}
\begin{itemize}
\item $\otimes$.  From Fig.~\ref{ych25}, $\Delta ch(\otimes) = \pm 1$.  
\item $\tr$, $\h$, and $\thr$.  Since one weak RI\!I\!I consists of two strong RI\!Is and a strong RI\!I\!I, $\Delta ch(\tr)=$ odd, $\Delta ch(\h)=$ even, and $\Delta ch(\thr)=$ even.  
\end{itemize} 
\end{enumerate}
\end{proof}
\begin{remark}
\cite{ITT} contains the results for the first, strong third, and weak third Reidemeister moves on $\otimes$.  
\end{remark}
\begin{remark}
Using Theorem \ref{thm_sec4}, we can easily create some invariants of knot projections in simple ways.  
Observing the table in Fig.~\ref{ych3}, $\otimes(P) \equiv \tr(P)$ (mod $2$) and we notice that $\otimes(P)$ (mod $2$), equivalently $\tr(P)$ (mod $2$), is invariant under the first and strong second moves.  $\otimes(P)$ (mod $3$) is invariant under the first and strong third moves, introduced in \cite{ITT}.  $\otimes(P)$, $\tr(P)$, $\h(P)$, and $\thr(P)$ are invariants under the first move.  
\end{remark}
\section{Proof of Theorem \ref{main2_thm}.}\label{sec_main2_thm}
\begin{proof}
To show Theorem \ref{main2_thm}, we check the difference of \[3\h(P) - 3\tr(P) + \otimes(P)\] under Reidemeister moves RI, strong RI\!I\!I, strong RI\!I, weak RI\!I, and weak RI\!I\!I in that order.  
\begin{itemize}
\item RI.  There is no change of $\otimes(P)$ under RI, and then neither $\h(P)$ nor $\tr(P)$ changes under RI.  
\item Strong RI\!I\!I.  Consider Fig.~\ref{ych22}.  Four chords consisting of a dotted chord $X$, called an {\it{$X$-type chord}}, and three other chords, called {\it{RI\!I\!I chords}}, are depicted explicitly.  In addition, recall that the difference in $\otimes(P)$ of a knot projection $P$ under strong RI\!I\!I is exactly $\pm 3$ supplied by three RI\!I\!I chords.  
\begin{itemize}
\item Difference of contributions by two non-RI\!I\!I chords and one RI\!I\!I chord.  There are no changes with respect to $\otimes(P)$, $\h(P)$, and $\tr(P)$, respectively.  
\item Difference of contributions by one non-RI\!I\!I chord and two RI\!I\!I chords.  
It is sufficient to consider the difference of contributions by one $X$-type chord and two RI\!I\!I chords.  
\begin{center}
\begin{tabular}{|c|c|} \hline
$x$ & $x$(left) $-$ $x$(right) in Fig.~\ref{ych22} \\ \hline
$\h(P)$ & $1$ \\ \hline
$\tr(P)$ & $1$ \\ \hline
$\otimes(P)$ & $0$ \\ \hline
\end{tabular}
\end{center}
Thus, there is no difference of $3 \h(P) - 3 \tr(P) + \otimes(P)$ in this case.  
\item Difference of contributions by three RI\!I\!I chords.  
\begin{center}
\begin{tabular}{|c|c|} \hline
$x$ & $x$(left) $-$ $x$(right) in Fig.~\ref{ych22} \\ \hline
$\h(P)$ & $0$ \\ \hline
$\tr(P)$ & $1$ \\ \hline
$\otimes(P)$ & $3$ \\ \hline
\end{tabular}
\end{center}
Thus, there is no difference of $3\h(P) - 3\tr(P) + \otimes(P)$ in this case.  
\end{itemize}
\item Strong RI\!I.  Consider Fig.~\ref{z1}.  
\begin{figure}[h!]
\includegraphics[width=5cm]{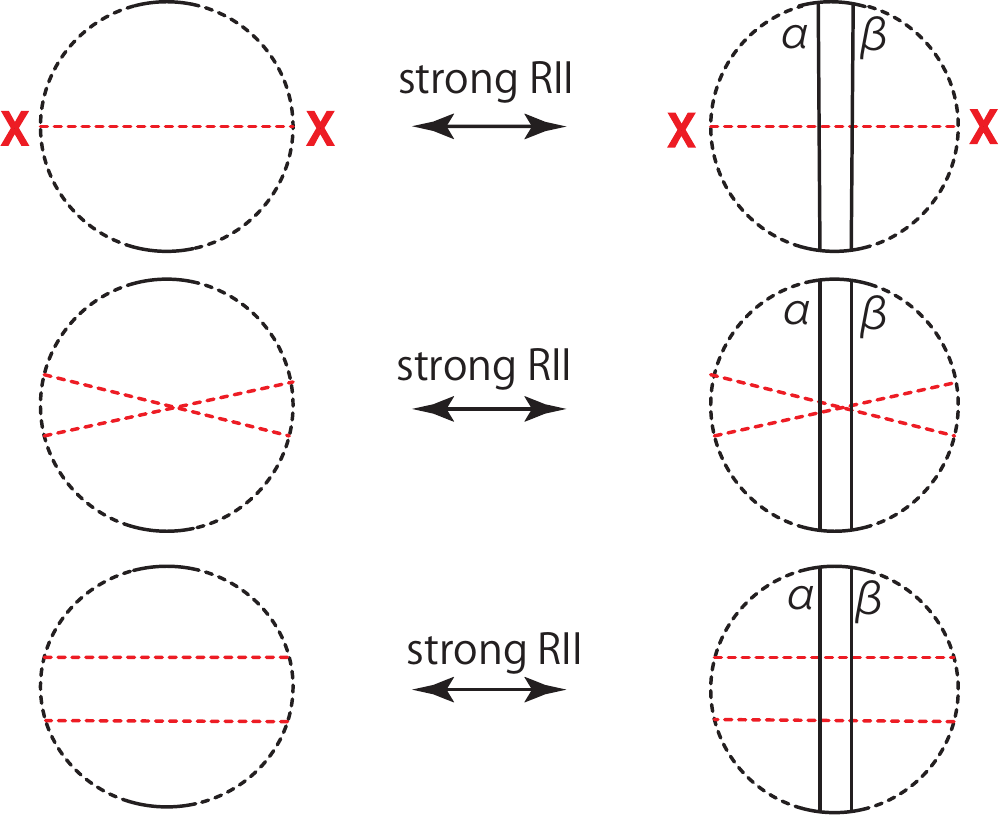}
\caption{Dotted chord in the top line: sticking chord $X$, dotted chords in the middle line: a pair of cross chords, dotted chords in the bottom line: a pair of parallel chords.}\label{z1}
\end{figure}
\begin{itemize}
\item (Top line of Fig.~\ref{z1}) We can assume that the number of chords crossing both $\alpha$ and $\beta$, called {\it{sticking chords}}, is $2m$ ($m \in \mathbb{Z}_{\ge 0}$) by the discussion with respect to Figs.~\ref{ych18} and \ref{ych19}.  

First, we count the difference caused by the tuple, each of which consists of one sticking chord, $\alpha$, and $\beta$.  
\begin{center}
\begin{tabular}{|c|c|} \hline
$x$ & $x$(right)-$x$(left) in Fig.~\ref{z1} \\ \hline
$\h(P)$ & $2m$ \\ \hline
$\tr(P)$ & $0$ \\ \hline
$\otimes(P)$ & $4m$ \\ \hline
\end{tabular}
\end{center}
\item (Center line of Fig.~\ref{z1})
Count the difference with respect to tuples, each of which consists of either $\alpha$ or $\beta$ and two sticking chords that mutually intersect and respectively intersect both $\alpha$ and $\beta$ (Fig.~\ref{z1}, the center line).  
Assume that such $l$ pairs in all the $\binom{2m}{2}$ pairs are types as the center of Fig.~\ref{z1}.  
\begin{center}
\begin{tabular}{|c|c|}\hline
$x$ & $x$(right) $-$ $x$(left) in Fig.~\ref{z1} \\ \hline
$\h(P)$ & $0$  \\ \hline
$\tr(P)$ & $2l$  \\ \hline
$\otimes(P)$ & $0$ \\ \hline
\end{tabular}
\end{center}
\item (Bottom line of Fig.~\ref{z1})  Consider $(\binom{2m}{2} - l)$ pairs, as in the bottom line of Fig.~\ref{z1}.  
\begin{center}
\begin{tabular}{|c|c|} \hline
$x$ & $x$(right) $-$ $x$(left) in Fig.~\ref{z1} \\ \hline
$\h(P)$ &  $2 (\binom{2m}{2} - l)$ \\ \hline
$\tr(P)$ &  $0$ \\ \hline
$\otimes(P)$ & $0$ \\ \hline
\end{tabular}
\end{center}
\end{itemize}

Therefore, the difference is \[ 3 \{ 2(\binom{2m}{2}-l) + 2m \} - 3 \cdot 2l + 4m = 12m^2 +4m -12l.\]
\item Weak RI\!I.  
Since weak RI\!I consists of two RIs, a strong RI\!I\!I, and three strong RI\!Is, the difference of $3 \h(P)$ $-$ $3 \tr(P)$ $+$ $\otimes(P)$ is $4n$ ($n \in \mathbb{Z}_{\ge 0}$) under one weak RI\!I.  
\item Weak RI\!I\!I.  Since one weak RI\!I\!I consists of two strong RI\!Is and a strong RI\!I\!I, the difference of $3\h(P)-3\tr(P)+\otimes(P)$ is $4n~(n \in \mathbb{Z}_{\ge 0})$ under one weak RI\!I\!I.  
\end{itemize}

Any knot projection $P$ is related to a simple closed curve $\bigcircle$ by a finite sequence consisting of RI, RI\!I, and RI\!I\!I.  RI\!I (resp.,~RI\!I\!I) consists of strong and weak RI\!I (resp.,~RI\!I\!I).  Now we have that each difference of RI, RI\!I, and RI\!I\!I is $4 k~(k \in \mathbb{Z})$ and $3\h(\bigcircle)-3\tr(\bigcircle)+\otimes(\bigcircle)=0$.  The conditions complete the proof.  
\end{proof}
\section{Proof of Theorem~\ref{main3} and properties of $\lambda$.}\label{sec3_main3}
\begin{proof}
First, we recall that $H(P)$, (resp.,~$\widetilde{X}(P)$) is invariant under RI and strong RI\!I\!I (resp.,~weak RI\!I\!I).  
\begin{itemize}
\item {\it{Invariance of $H(P)$}} (cf.~\cite{ITT}).  By Theorem \ref{thm_sec4}, $H(P)$ is invariant under RI.  

Now we assume that $H(P)=0$ on the right of Fig.~\ref{ych22}.  In this case, no $X$-type chord can appear in Fig.~\ref{ych22}.  Then, no RI\!I\!I chords can be involved with producing $\h$ on the right of Fig.~\ref{ych21}.  If three non-RI\!I\!I chords comprise $\h$ in the right chord diagram in Fig.~\ref{ych21}, the right chord diagram has $\h$, which is contradicts $H(P)=0$.  Thus, we notice that $CD_P$ has no $\h$ in the right of Fig.~\ref{ych21} if and only if there is no $X$-type chord and no tuple of three non-RI\!I\!I chords comprising any $\h$.  We can also say that $CD_P$ has no $\h$ on the left of Fig.~\ref{ych21} if and only if there is no $X$-type chord and no tuple of three non-RI\!I\!I chords comprising any $\h$.  Therefore, when we denote the left (resp.,~right) knot projection by $P_l$ (resp.,~$P_r$) of the arrow of strong RI\!I\!I in Fig.~\ref{ych7}, 
\[H(P_l)=0 \Leftrightarrow H(P_r)=0.\]
Thus, \[H(P_l)=1 \Leftrightarrow H(P_r)=1.\]
\item {\it{Invariance of $\widetilde{X}(P)$.}}  By Theorem \ref{thm_sec4}, $\widetilde{X}(P)$ is invariant under RI.  Every chord diagram appearing in Fig.~\ref{ych25} always satisfies $\widetilde{X}(P)=1$.  
\end{itemize}
Second, we recall one of the facts from Sakamoto-Taniyama \cite[Theorem 3.2]{ST}.  
\begin{fact}[Sakamoto-Taniyama \cite{ST}]\label{st_thm}
Let $P$ be an immersed plane curve.  
A chord diagram $CD_P$ does not contain \h if and only if $P$ is equivalent to any connected sum of some plane curves, each of which is equivalent to one of the plane curves as $\bigcircle$, $\infty$, $P_1$, $P_2$, $P_3$, \dots as illustrated in Fig.~\ref{torus_knots}.  
\begin{figure}[h!]
\includegraphics[width=8cm]{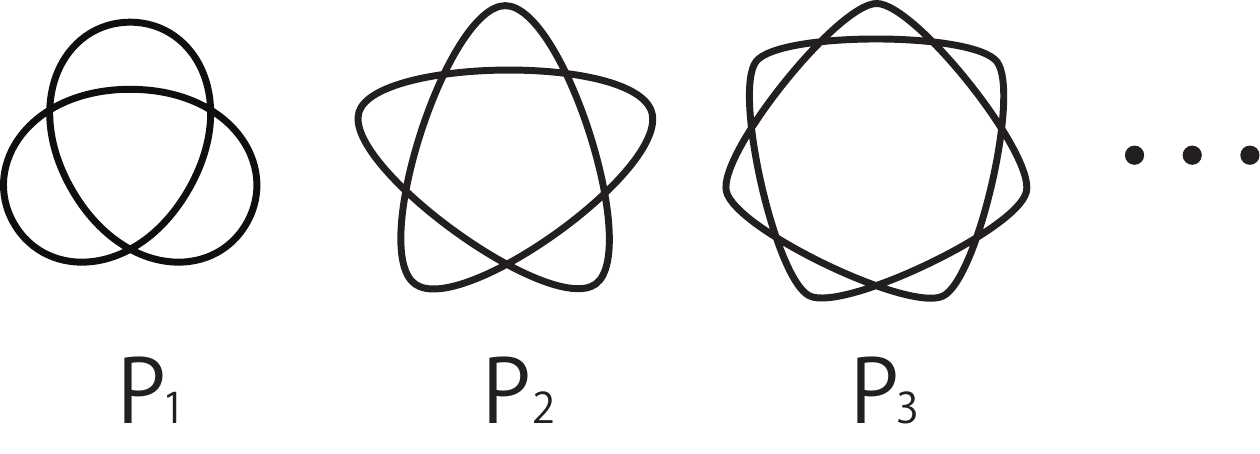}
\caption{$(2, 2i+1)$-torus knot projection $P_{i}$.}\label{torus_knots}
\end{figure}
\end{fact}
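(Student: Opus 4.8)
The plan is to reduce the statement to a purely combinatorial property of the interlacement graph of $CD_P$ and then realize the resulting decomposition geometrically. Let $G_P$ be the graph whose vertices are the chords of $CD_P$ and whose edges join pairs of crossing (interleaved) chords. The first step is to observe that an H-chord \h\ is precisely an induced path on three vertices in $G_P$: three chords $a,c,b$ with $c$ crossing both $a$ and $b$ while $a$ and $b$ do not cross. Hence $CD_P$ contains no \h\ if and only if $G_P$ has no induced $P_3$, which is the classical characterization of graphs that are disjoint unions of complete graphs. Thus ``no \h'' is equivalent to: the chords of $CD_P$ partition into cliques $C_1,\dots,C_r$ of pairwise-crossing chords, any two chords from different cliques being non-crossing.

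For the easy direction I would argue from the right-hand side. In a connected sum the chords of distinct summands occupy disjoint arcs of the circle, so they are pairwise non-crossing, whereas inside each $P_i$ (the $(2,2i+1)$-torus projection) all chords pairwise cross; the diagram of $\infty$ has a single chord and that of \bigcircle\ has none. An \h\ would need two non-crossing chords $a,b$ both crossed by a third chord $c$. A short case check rules this out: any chord crossing $c$ must lie in the same summand as $c$, so $a,b,c$ all lie in one summand, whence either $a$ and $b$ cross (torus summand) or there are too few chords ($\infty$, \bigcircle), a contradiction.

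The hard direction I would prove by induction on the number of double points. If $G_P$ is a single clique (or empty), then $CD_P$ is the unique diagram in which all $k$ chords pairwise cross, namely the $(2,k)$ pattern with Gauss sequence $1\,2\cdots k\,1\,2\cdots k$; this is realizable as a spherical curve precisely when $k=0$ or $k$ is odd, giving \bigcircle, and $\infty$ for $k=1$, and $P_i$ for $k=2i+1$, while the even cases $k\ge 2$ are exactly the non-realizable codes such as $1\,2\,1\,2$. This is the base case and the identification of the prime pieces. If instead $r\ge 2$, the crux is a \emph{separation lemma}: the cliques can be split by two points of the circle into two nonempty groups, each occupying one of the complementary arcs, so that $P=P'\#P''$ is a nontrivial connected sum whose chord diagrams again contain no \h; applying the induction hypothesis to $P'$ and $P''$ then finishes the proof. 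To set up the lemma I would note that, since the chords of a clique mutually cross, their endpoints interleave and cut the circle into gaps, and every foreign chord must have both endpoints inside a single such gap (otherwise some chord of the clique would cross it); on the sphere this traps the structure enough to locate a clique that can be cut off as a summand.

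The main obstacle I anticipate is exactly this separation lemma: promoting the combinatorial clique decomposition of $G_P$ to a genuine geometric connected-sum decomposition. The difficulty is that non-crossing chords from different cliques may be nested rather than lying side by side, so one cannot simply cut along a planar arc. The key that unlocks it is the spherical setting used throughout the paper: on $S^2$ there is no distinguished unbounded region, so the two arcs determined by a chord play symmetric roles and any pair of non-crossing chords is separable, which lets one peel off an innermost clique. I would also verify that cutting preserves realizability, so that each summand is again a bona fide spherical curve of odd clique size by the realizability criterion for $1\,2\cdots k\,1\,2\cdots k$, and this is precisely what forces the prime pieces to be exactly \bigcircle, $\infty$, and the torus projections $P_i$ of Fig.~\ref{torus_knots}.
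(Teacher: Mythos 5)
This statement is Fact~\ref{st_thm}, quoted from Sakamoto--Taniyama \cite{ST}; the paper gives no proof of it, so there is no internal argument to compare yours against, and your proposal must be judged as a self-contained reconstruction. As such it is sound and follows what is essentially the natural route. The translation ``no \h{} $\Leftrightarrow$ the interlacement graph is $P_3$-free $\Leftrightarrow$ it is a disjoint union of cliques'' is correct, the easy direction is fine, and the base case uses the right tools: the all-crossing diagram on $k$ chords is the unique one whose interlacement graph is a clique, and Gauss's parity condition (every chord of a realizable diagram crosses an even number of others) excludes even $k\ge 2$. Two places need more than a gesture. First, in the base case you silently use that the realization of the Gauss sequence $1\,2\cdots k\,1\,2\cdots k$ on $S^2$ is \emph{unique} up to homeomorphism, so that the summand must actually be $P_i$; this is true because the diagram is connected (classical uniqueness of realizations of connected Gauss codes on the sphere), but it is a genuine step, not a tautology. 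Second, your separation lemma: the combinatorial half does work --- fix a clique $C$; its endpoints cut the circle into gaps, every foreign chord has both endpoints in a single gap, and each foreign clique, being connected under crossing, sits wholly in one gap, so two points of the circle separate $C$ from at least one other clique --- but promoting this splitting of $CD_P$ to a geometric connected sum of curves still requires an argument, and ``peeling off an innermost clique'' is vaguer than necessary. A clean way: let $A_1,A_2$ be the two subarcs of $P$ determined by the two separating points; they meet only at their common endpoints, a sufficiently thin regular neighborhood $N$ of $\mathrm{im}(A_1)$ is a connected planar surface met by $A_2$ only near those endpoints, the complement of $N$ in $S^2$ is a union of disks, and the boundary of the disk containing $A_2$ is a simple closed curve meeting $P$ transversally in exactly two points, exhibiting $P$ as a connected sum with the double points of $A_1$ on one side and those of $A_2$ on the other. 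With these two points filled in, the induction closes and your argument is complete.
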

Notice that the same claim holds for knot projections with any possible choice of exterior region.  
\begin{align*}
H(P)=0 \Leftrightarrow & P~{\text{is any connected sum of knot projections, each of which is}} \\
&{\text{an element of}}~\mathcal{T}=\{ \bigcircle, \infty, {P_i}~(1 \le i \in \mathbb{Z})\}.  
\end{align*}
Now, we prove the first formula of Theorem~\ref{main3}.  Assume that $H(P)=0$ and $\lambda(P)=0$.  In this case, it is sufficient to consider any connected sum of elements in $\mathcal{T}$.  Note that $\lambda(P \sharp P')$ $=$ $\lambda(P)$ $+$ $\lambda(P')$, where $P \sharp P'$ is the connected sum of $P$ and $P'$.  
Thus $\lambda(P) < 0$ if $P$ is a connected sum of knot projections satisfying $H(P)=0$ and at least one member is $P_{2i+1}~(i > 1)$.  This is because $\lambda(\bigcircle)=\lambda(\infty)=0$ and $4 \lambda(P_{2i+1})$ $=$ $-3\tr(P_{2i+1})+\otimes(P_{2i+1})$ $=$ $-3 \binom{2i+1}{3}$ $+$ $\binom{2i+1}{2}$ $=$ $i(2i+1)(1-i)$.  
Then, if $H(P)=\lambda(P)=0$, $P$ is a connected sum of knot projections, each of which is an element of ${\mathcal{T}}_1$ $=$ $\{\bigcircle, \infty, P_1\}$.  Therefore, $P$ can be related to a simple closed curve $\bigcircle$ by a finite sequence consisting of RI and strong RI\!I\!I. 

Conversely, if $P$ can be related to a simple closed curve $\bigcircle$ by a finite sequence consisting of RI and strong RI\!I\!I, then $H(P)=\lambda(P)=0$.  
 
Then, we have
\[H(P)=\lambda(P)=0 \Leftrightarrow P~{\text{can be related to $\bigcircle$ by using $RI$ and strong RI\!I\!I}.}\]
This completes the proof of the first formula of Theorem~\ref{main3}.  

Next, we show the fourth formula before considering the second and third formulae.  Assume that $\widetilde{X}(P)=0$.  In this case, we have a chord diagram with no chord intersections. A knot projection $P$ having such a chord diagram can be related to a simple closed curve $\bigcircle$ by a finite sequence consisting of RI.  Conversely, if a knot projection $P$ can be related to $\bigcircle$ by a finite sequence consisting of RI, we have $\otimes(P)=0$, which implies $\widetilde{X}(P)=0$.  Then, we have the fourth formula
\begin{equation*}\label{eq1}
\widetilde{X}(P)=0 \Leftrightarrow P~{\text{can be related to $\bigcircle$ by using $RI$}.}
\end{equation*}
Now, we consider the third formula.  Since $\widetilde{X}(P)$ is invariant under RI and weak RI\!I\!I, 
\[P~{\text{can be related to $\bigcircle$ by using $RI$ and weak $RI\!I\!I$}} \Rightarrow \widetilde{X}(P)=\widetilde{X}(\bigcircle)=0. \]
Then 
\begin{equation*}\label{eq2}
\widetilde{X}(P)=0 \Leftrightarrow P~{\text{can be related to $\bigcircle$ by using $RI$ and weak $RI\!I\!I$}}
\end{equation*}
where we used the fourth formula to show ($\Rightarrow$).  

Finally, we show the second formula.  We assume that $\tr(P)=\lambda(P)=0$.  This condition implies $0$ $=$ $4\lambda(P)$ $=$ $3\h(P)$ $+$ $\otimes(P)$ and, originally, we have $\h(P) \ge 0$ and $\otimes(P) \ge 0$.  Then $\otimes(P)=\h(P)=0$.  We notice that $\otimes(P)=0$ if and only if $\widetilde{X}(P)=0$, which implies $\h(P)=0$.  Then, 
\[\tr(P)=\lambda(P)=0 \Leftrightarrow \otimes(P)=\h(P)=\lambda(P)=0 \Leftrightarrow \otimes(P)=0 \Leftrightarrow \widetilde{X}(P)=0. \]
Using the proof of the third formula in the above,  
\[\tr(P)=\lambda(P)=0 \Leftrightarrow P~{\text{can be related to $\bigcircle$ by using $RI$.}}\]
That completes the proof.  
\end{proof}

The third and fourth formulae in Theorem~\ref{main3} imply \cite[Corollary 4.1]{IT}.  
\begin{corollary}[\cite{IT}]
A knot projection $P$ can be related to $\bigcircle$ by a finite sequence consisting of RI and weak RI\!I\!I if and only if $P$ can be related to $\bigcircle$ by a finite sequence consisting of RI.   
\end{corollary}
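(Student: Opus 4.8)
The plan is to treat the invariant $\widetilde{X}$ from Theorem~\ref{main3} as a bridge between the two reducibility statements, so that the corollary reduces to a short logical chaining of the third and fourth formulae of that theorem. Concretely, I would show that each of the two reducibility conditions is equivalent to the single condition $\widetilde{X}(P)=0$, and then conclude that the two conditions are equivalent to one another through this common middle term.

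First I would dispose of the easy implication. If $P$ can be related to $\bigcircle$ by a finite sequence consisting of RI alone, then that very sequence---making no use of weak RI\!I\!I---already exhibits $P$ as relatable to $\bigcircle$ by RI and weak RI\!I\!I. So the entire content of the corollary lies in the reverse implication.

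For the reverse implication I would appeal to the invariance of $\widetilde{X}(P)$ under RI and weak RI\!I\!I recorded in Theorem~\ref{main3}. If $P$ can be related to $\bigcircle$ using RI and weak RI\!I\!I, then $\widetilde{X}$ is unchanged along the whole sequence, so $\widetilde{X}(P)=\widetilde{X}(\bigcircle)=0$, the last equality holding because a simple closed curve has no double points and hence $\otimes(\bigcircle)=0$. The fourth formula of Theorem~\ref{main3} then converts $\widetilde{X}(P)=0$ into the statement that $P$ can be related to $\bigcircle$ by RI alone, which is exactly the desired conclusion. Equivalently, one simply composes the third and fourth equivalences at their shared condition $\widetilde{X}(P)=0$.

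Because every ingredient is already supplied by Theorem~\ref{main3}, I do not expect a substantive obstacle: the genuine work was carried out in establishing the invariance of $\widetilde{X}$ and the fourth formula. The only points requiring care are confirming that the trivial (empty) use of weak RI\!I\!I legitimately yields the easy implication, and recording the elementary fact $\widetilde{X}(\bigcircle)=0$.
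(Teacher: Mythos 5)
Your proposal is correct and matches the paper's argument: the paper derives this corollary precisely by composing the third and fourth formulae of Theorem~\ref{main3} at their common condition $\widetilde{X}(P)=0$, which is exactly your chaining. The extra care you take with the trivial direction and with $\widetilde{X}(\bigcircle)=0$ is consistent with how those formulae are established in Sec.~\ref{sec3_main3}.
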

\begin{remark}
The above proof of the first formula in Theorem~\ref{main3} implies Fact~\ref{itt_thm} from \cite{ITT}.
\begin{fact}[Ito-Takimura-Taniyama \cite{ITT}]\label{itt_thm}
The following $(\ref{1})$ and $(\ref{2})$ are mutually equivalent.
\begin{enumerate}
\newcommand{\enumi}{(\arabic{1})}
\item A knot projection $P$ is any connected sum of knot projections, each of which is an element of $\mathcal{T}_1$. \label{1}
\item A knot projection $P$ and a simple closed curve $\bigcircle$ on the sphere can be related by a finite sequence consisting of RI and strong RI\!I\!I.  \label{2}
\end{enumerate}
\end{fact}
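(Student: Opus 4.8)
The plan is to read the equivalence (\ref{1}) $\Leftrightarrow$ (\ref{2}) off the first formula of Theorem~\ref{main3}, which we have already proved in the form
\[H(P)=\lambda(P)=0 \Leftrightarrow P~\text{can be related to}~\bigcircle~\text{by RI and strong RI\!I\!I.}\]
Its right-hand side is precisely condition (\ref{2}), so it suffices to establish that condition (\ref{1}) is equivalent to the pair of equalities $H(P)=\lambda(P)=0$; chaining the two equivalences then yields (\ref{1}) $\Leftrightarrow$ (\ref{2}).

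First I would prove (\ref{1}) $\Rightarrow H(P)=\lambda(P)=0$. If $P$ is a connected sum of members of $\mathcal{T}_1=\{\bigcircle,\infty,P_1\}$, then since $\mathcal{T}_1\subseteq\mathcal{T}$, the equivalence between $H(P)=0$ and $P$ being a connected sum of members of $\mathcal{T}$ (established from Fact~\ref{st_thm} in the proof of Theorem~\ref{main3}) gives $H(P)=0$. For the other equality I would invoke additivity of $\lambda$ under connected sum (Proposition~\ref{additive_prop}) together with the values $\lambda(\bigcircle)=\lambda(\infty)=\lambda(P_1)=0$, the last holding because $4\lambda(P_1)=-3\binom{3}{3}+\binom{3}{2}=0$; hence $\lambda(P)$ is a finite sum of zeros and $\lambda(P)=0$.

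For the converse $H(P)=\lambda(P)=0\Rightarrow$ (\ref{1}), I would start from $H(P)=0$, which forces $P$ to be a connected sum of members of $\mathcal{T}=\{\bigcircle,\infty,P_i\ (i\ge 1)\}$. The decisive ingredient is the computation already carried out in the proof of Theorem~\ref{main3}, namely $4\lambda(P_i)=-3\binom{2i+1}{3}+\binom{2i+1}{2}<0$ for every $i\ge 2$, whereas $\lambda$ vanishes on $\bigcircle$, $\infty$, and $P_1$. By additivity, $\lambda(P)$ is the sum of these nonpositive summand-contributions, so the equality $\lambda(P)=0$ can hold only if no summand equal to some $P_i$ with $i\ge 2$ appears; note that no uniqueness of the decomposition is needed, only the existence of one decomposition into members of $\mathcal{T}$. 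Thus every summand lies in $\mathcal{T}_1$, which is exactly condition (\ref{1}).

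The argument is in essence a repackaging of the proof of the first formula of Theorem~\ref{main3}, so I anticipate no genuinely new difficulty. The one point that must be handled with care is the strict negativity $\lambda(P_i)<0$ for $i\ge 2$: this is what allows additivity to exclude every torus summand beyond $P_1$, and it depends only on the binomial identity above rather than on any further topological input.
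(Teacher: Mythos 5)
Your proposal is correct and matches the paper's own treatment: the paper's Remark states that Fact~\ref{itt_thm} follows from the proof of the first formula of Theorem~\ref{main3}, which establishes exactly the chain you describe, namely $H(P)=\lambda(P)=0 \Leftrightarrow$ (\ref{1}) $\Leftrightarrow$ (\ref{2}), using Fact~\ref{st_thm}, additivity of $\lambda$, and the computation $4\lambda(P_i)=-3\binom{2i+1}{3}+\binom{2i+1}{2}$ showing $\lambda(P_i)<0$ for $i\ge 2$ while $\lambda$ vanishes on $\mathcal{T}_1$. No new ingredient is needed beyond what you cite.
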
 
\end{remark}
As in the proof of Theorem~\ref{main3}, we have
\begin{proposition}\label{additive_prop}
Let $P$ and $P'$ be arbitrary knot projections and $P \sharp P'$ the connected sum of $P$ and $P'$.  Let $x(P)$ be the number of sub-chord diagrams of type $x$ embedded into $CD_P$ for a knot projection $P$, where $x$ does not contain a chord that does not intersect any other chords. 
\[x(P \sharp P')=x(P)+x(P').\]
As a corollary, 
\[\lambda(P \sharp P') = \lambda(P) + \lambda(P').\]
\end{proposition}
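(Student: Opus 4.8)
The plan is to analyze the chord diagram of a connected sum directly and then reduce the counting to a partition of the sub-chord diagrams according to which summand their chords come from.

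First I would make precise how $CD_{P \sharp P'}$ is built from $CD_P$ and $CD_{P'}$. The connected sum is performed along embedded (regular) arcs of $P$ and $P'$, away from double points, so that traversing $P \sharp P'$ one reads off first all the double-point preimages of $P$ and then all those of $P'$. Consequently $CD_{P\sharp P'}$ is the circle on which the endpoints coming from $P$ occupy one arc and those coming from $P'$ occupy the complementary arc; its chord set is the disjoint union of the chords of $CD_P$ and of $CD_{P'}$, and \emph{no chord inherited from $P$ crosses a chord inherited from $P'$}. I would also record that the cyclic order of the $P$-endpoints inside $CD_{P\sharp P'}$ agrees with their cyclic order in $CD_P$, so that crossings among $P$-chords are exactly the crossings in $CD_P$ (and likewise for $P'$). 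Establishing this structural description rigorously, including its independence from the choices of connecting arcs and of exterior region on the sphere, is the first and most delicate step.

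Next I would split the count according to the summand membership of the chords:
\[
x(P \sharp P') = x_P + x_{P'} + x_{\mathrm{mix}},
\]
where $x_P$ (resp.~$x_{P'}$) counts the copies of $x$ all of whose chords lie in $CD_P$ (resp.~$CD_{P'}$) and $x_{\mathrm{mix}}$ counts the copies using chords from both summands. By the order-preservation noted above, $x_P = x(P)$ and $x_{P'} = x(P')$, so the proposition reduces to the claim $x_{\mathrm{mix}} = 0$.

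The heart of the argument is showing $x_{\mathrm{mix}} = 0$, and this is where the hypothesis on $x$ is used. For each of the patterns $\otimes, \tr, \h, \thr, \HH$ the crossing graph, whose vertices are the chords of $x$ and whose edges are the crossing pairs, is connected (it is $K_2$, $K_3$, a path, a star, and $K_{2,2}$, respectively); this is the strong form of the stated condition that $x$ contains no chord disjoint from all the others. Suppose a copy of $x$ had chords in both summands. Since no $P$-chord crosses a $P'$-chord, the crossing graph of that copy would have no edge joining a $P$-chord to a $P'$-chord, hence it would be disconnected, contradicting connectivity of the pattern. Therefore every copy of $x$ lies entirely in one summand, $x_{\mathrm{mix}}=0$, and $x(P\sharp P') = x(P) + x(P')$. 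Finally the corollary is immediate by linearity: since $\h$, $\tr$, and $\otimes$ are each additive,
\[
\lambda(P \sharp P') = \frac{3\h(P\sharp P') - 3\tr(P\sharp P') + \otimes(P\sharp P')}{4} = \lambda(P) + \lambda(P').
\]
I expect the main obstacle to be the first step, namely pinning down the connected-sum chord diagram model cleanly enough that both the absence of crossings between the two summands and the preservation of crossings within each summand are transparent; once that model is in place, the partition and the connectivity argument are routine.
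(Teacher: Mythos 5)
Your proposal is correct and follows essentially the same route as the paper, which likewise observes that in $CD_{P\sharp P'}$ no chord inherited from $P$ crosses a chord inherited from $P'$, so that every embedded copy of $x$ must lie entirely in one summand, and then deduces the formula for $\lambda$ by linearity. Your only substantive addition is the (worthwhile) observation that one really needs the crossing graph of $x$ to be connected rather than merely free of isolated chords, which holds for each of $\otimes$, $\tr$, $\h$, $\thr$, and $\HH$; the paper leaves this implicit.
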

\begin{proof}
By the definitions of a chord diagram of a knot projection and $x$, we immediately have $x(P \sharp P')=x(P)+x(P')$, since chords from $CD_P$ and those of $CD_{P'}$ do not intersect.  
In particular, we consider the cases $x=\otimes, \tr, \h$.  Then, 
\begin{align*}
\lambda(P \sharp P') &= \frac{3}{4}\h(P \sharp P') - \frac{3}{4}\tr(P \sharp P') + \frac{1}{4} \otimes(P \sharp P') \\
&=\frac{3}{4}\h(P) + \frac{3}{4}\h(P') - \frac{3}{4}\tr(P) - \frac{3}{4}\tr(P') + \frac{1}{4} \otimes(P) + \frac{1}{4} \otimes(P')\\
&=\lambda(P) + \lambda(P').  
\end{align*}
\end{proof}
\begin{proposition}
For any integer $k$, there exists a knot projection $P$ such that $\lambda(P)=k$.  
\end{proposition}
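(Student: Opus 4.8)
The plan is to realize every integer as the $\lambda$-value of a suitable connected sum, using additivity as the engine. By Proposition~\ref{additive_prop} the image $\Lambda=\{\lambda(P):P\ \text{a knot projection}\}$ is closed under addition and contains $\lambda(\bigcircle)=0$, so it is a submonoid of $(\mathbb{Z},+)$. To force $\Lambda=\mathbb{Z}$ it suffices to exhibit one projection with a positive value and one with a negative value whose absolute values are coprime; the nonnegativity constraint that is inherent in connected sum is then removed by a standard shifting argument.

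First I would record a projection with negative $\lambda$. Take $P_{2}$, the $(2,5)$-torus knot projection, whose chord diagram consists of five mutually crossing chords. Then $\otimes(P_{2})=\binom{5}{2}=10$, $\tr(P_{2})=\binom{5}{3}=10$, and $\h(P_{2})=0$, so $\lambda(P_{2})=(3\cdot 0-3\cdot 10+10)/4=-5$, in agreement with the computation of $\lambda(P_{i})$ in the proof of Theorem~\ref{main3}.

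Next I would produce a projection with positive $\lambda$, and this is the hard part: the torus family $P_{i}$ yields only the values $\lambda(P_{i})\le 0$, so a genuinely different building block is needed. The figure-eight projection $E$ (a prime reduced projection with four double points, appearing in Fig.~\ref{hyou5}) serves. Labelling its four chords $1,2,3,4$, the crossing pairs are exactly $\{1,3\},\{1,4\},\{2,3\},\{2,4\}$ (a triangle-free, $4$-cycle pattern of interlacement). Hence $\otimes(E)=4$; no three chords are pairwise crossing, so $\tr(E)=0$; and each of the four triples of chords is an H-chord, so $\h(E)=4$. Therefore $\lambda(E)=(3\cdot 4-3\cdot 0+4)/4=4$.

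Finally I would combine the two inputs. Since $\gcd(4,5)=1$, for any $k\in\mathbb{Z}$ there are integers $s,t$ with $4s-5t=k$, and replacing $(s,t)$ by $(s+5j,\,t+4j)$ for $j$ large preserves $4s-5t$ while making both entries nonnegative. Writing $a=s+5j\ge 0$ and $b=t+4j\ge 0$, the connected sum $P$ of $a$ copies of $E$ and $b$ copies of $P_{2}$ satisfies, by additivity, $\lambda(P)=a\,\lambda(E)+b\,\lambda(P_{2})=4a-5b=k$ (with $P=\bigcircle$ when $a=b=0$), which gives the proposition. The only step requiring real care is the positive-value input: I would verify the chord-diagram counts $\otimes(E)=4$, $\tr(E)=0$, $\h(E)=4$ directly from the Gauss word of $E$, after which everything else is the bookkeeping above.
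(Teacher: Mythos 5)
Your proof is correct and follows essentially the same route as the paper's: both arguments run on the additivity of $\lambda$ under connected sum together with explicitly computed values on small prime projections (your $E$ is the paper's $4_1$ with $\lambda=4$, and your $P_2$ is its $5_1$ with $\lambda=-5$). The only cosmetic difference is that the paper also uses $7_3$ with $\lambda(7_3)=-3$ to build summands of value $\pm 1$ directly, whereas you reach every integer from the coprime pair $(4,-5)$ via a shifting argument.
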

\begin{proof}
In this proof, we use the symbol as $n_m$ to represent a knot projection defined by Figs.~\ref{hyou3} and \ref{hyou4}.  Note that
\[\lambda(4_1)=4, \lambda(5_1)=-5,~{\text{and}}~\lambda(7_3)=-3.\]
Using the above additivity, $\lambda(4_1 \sharp 5_1)=-1$ and $\lambda(4_1 \sharp 7_3)=1$.  If $k$ is a positive integer, a connected sum $P$ of $k$ pairs, each of which consists of $4_1$ and $7_3$ satisfies $\lambda(P)=k$.  If $k$ is a negative integer, a connected sum $P'$ of $k$ pairs, each of which consists of $4_1$ and $5_1$ satisfies $\lambda(P')=k$.  Noting that $\lambda(3_1)=0$, the proof is completed.  
\end{proof}
\section{Proof of Theorem~\ref{flype_thm}.}\label{sec_flype}
\begin{proof}
We consider all possibilities of connections of tangles shown in Fig.~\ref{flyped} having four end points.  
Then, we have exactly six cases (Fig.~\ref{case1--6}).  
It is easy to see that we can omit Cases 2, 4, and 6.  Moreover, by retaking the shaded part, Case 3 becomes Case 5, and thus, we can omit Case 3.  Therefore, in the following, we only consider Cases 1 and 5.  
Throughout this proof, the phrase {\it{shaded parts}} refers to the shaded parts in Cases 1 and 5 in Fig.~\ref{case1--6}.  
\begin{figure}
\begin{tabular}{|c|c|}\hline
\includegraphics[width=6cm]{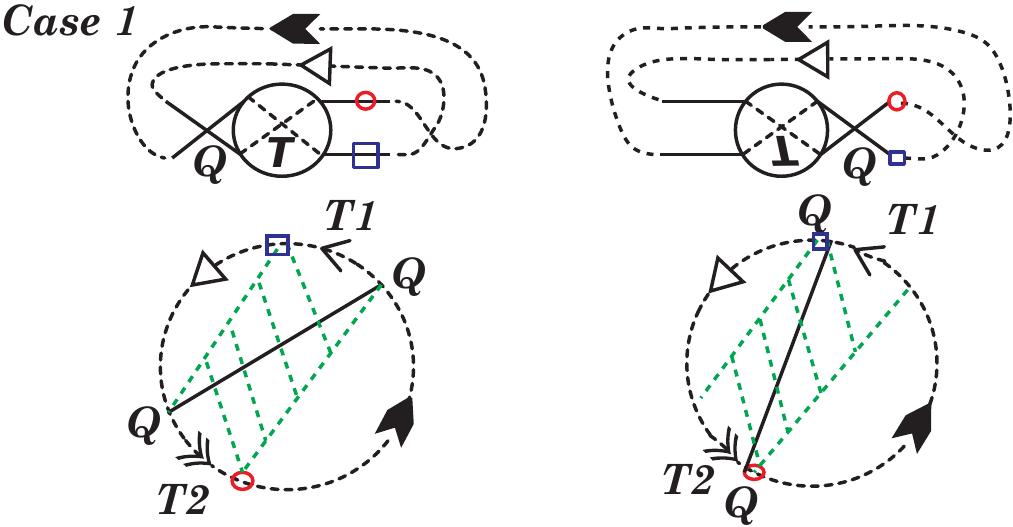}
& \includegraphics[width=6cm]{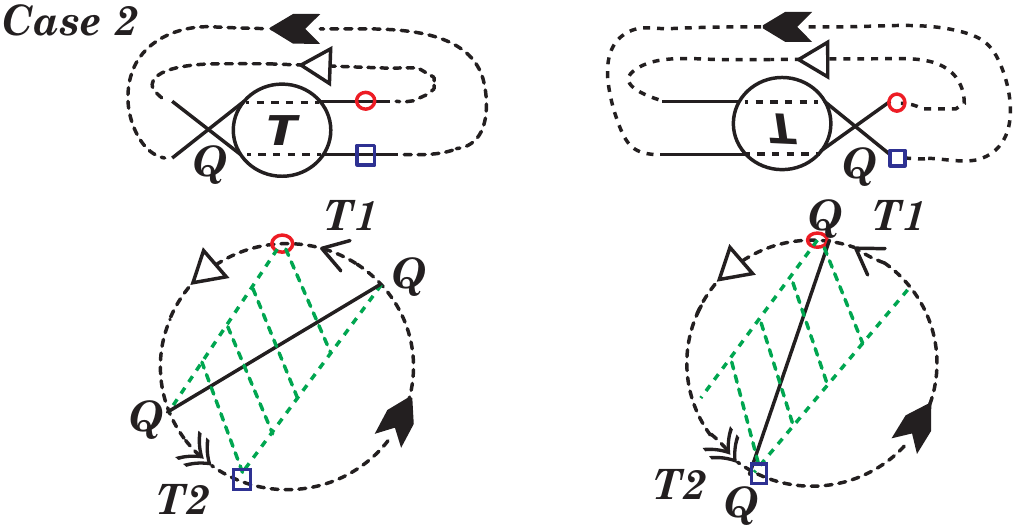} \\ \hline
\includegraphics[width=6cm]{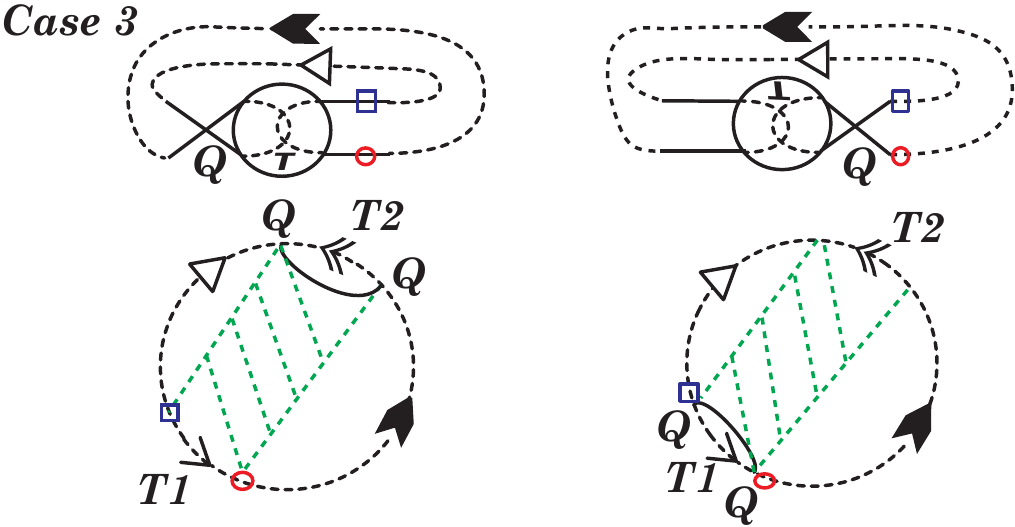}
& \includegraphics[width=6cm]{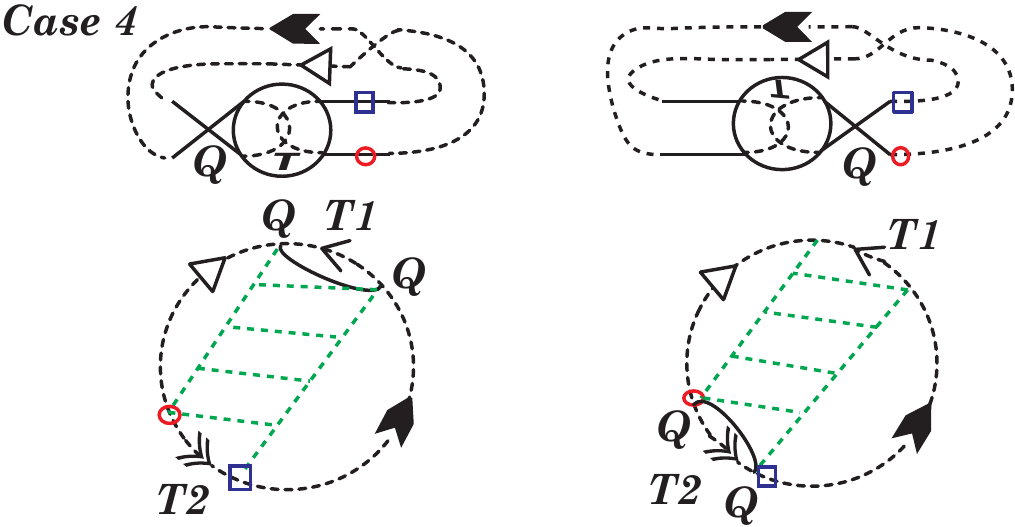} \\ \hline
\includegraphics[width=6cm]{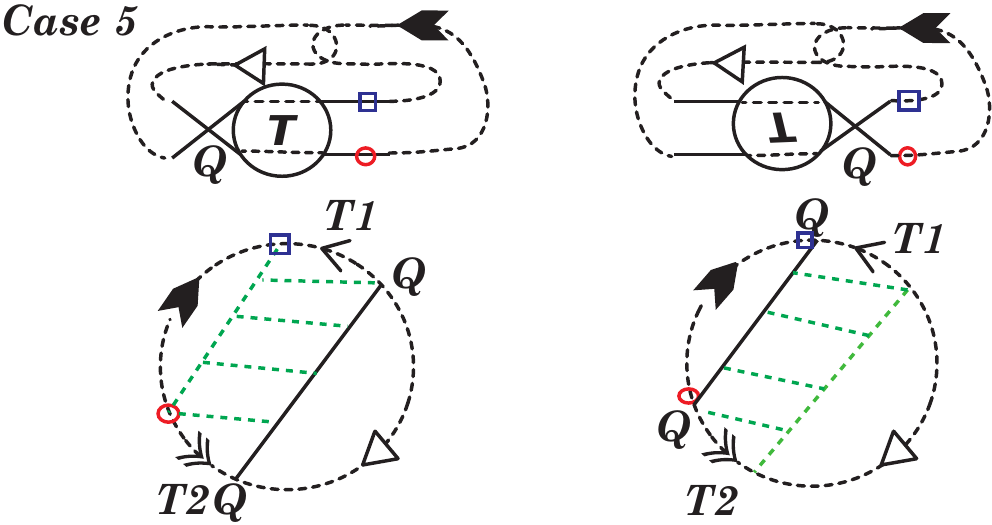}
& \includegraphics[width=6cm]{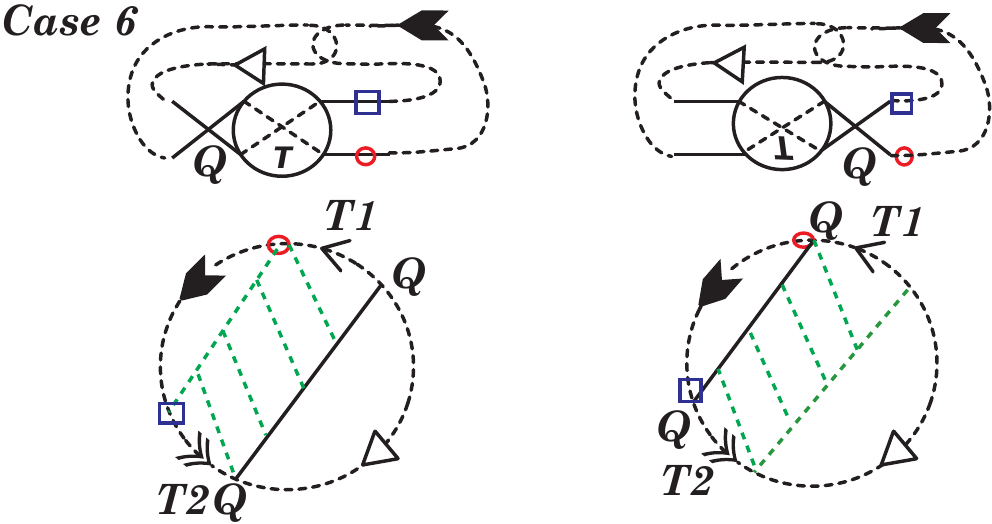} \\ \hline
\end{tabular}
\caption{Cases 1--6.}\label{case1--6}
\end{figure}
According to the definition of flypes, there is no chord connecting an endpoint on the shaded part with another endpoint on the non-shaded part ($\star$).  This condition is called {\it{condition $(\star)$}}.  Please refer to the following on the basis of Cases 1 and 5 in Fig.~\ref{case1--6}.  Further, note that we can omit the case wherein $Q$ is not contained by the sub-chord that is being counted, since such a sub-chord should be counted in both before and after applying a flype ($\diamondsuit$).  
\begin{itemize}
\item $\otimes$.  
\begin{itemize}
\item Assume that no chord of $\otimes$ is in the shaded part (zero-chord case).  Chord $Q$ is in the shaded part, and therefore, we can omit the case (cf.~($\diamondsuit$)).  
\item Assume that exactly one chord of $\otimes$ is in the shaded part (one-chord case).  Fig.~\ref{case1--6} (Cases 1 and 5) illustrates the claim.  
\item Assume that two chords of $\otimes$ are in the shaded part (two-chord case).    It is easy to verify the claim in Case 1.  Further, note that Case 5 is a case that is not related to $Q$ (cf.~($\diamondsuit$)).  
\end{itemize} 
\item $\tr$.  We present a discussion similar to that of $\otimes$ case.  In the rest of this proof, in the case that $y$ chords of the sub-chord we have chosen (now we choose $\tr$) are included in the shaded part in the whole chord diagram, we call the case a ``$y$-chord case.''
\begin{itemize}
\item Zero-chord case.  The $\tr$ we focus on is not contained in the shaded part and the case is not related to $Q$.  Therefore, we can omit the case.  
\item One-chord case.  Fig.~\ref{case1--6} (Cases 1 and 5) illustrates the claim.  
\item Two-chord case.  Case 5 has no possibility to realize $\tr$ containing $Q$.  Fig.~\ref{case1--6} (Case 1) illustrates the claim.
\item Three-chord case.  Case 5 has no possibility to realize $\tr$ containing $Q$.  Case 1 fixes the type of the two other chords crossing $Q$ and we therefore show the claim easily.   
\end{itemize}
We know that we need not mention the zero-chord case since the case has no possibility to realize the focused sub-chord containing $Q$.  
Therefore, we omit the zero-chord case in the following.  
\item $\h$.  
$\h$ is composed of two parallel chords and the chord crossing the two parallel chords, called the {\it{sticking chord}}.   
\begin{itemize}
\item One-chord case.  Fig.~\ref{case1--6} (Cases 1 and 5) illustrates the claim.  In each case, chord $Q$ can either be the sticking chord or a non-sticking chord.  
\item Two-chord case.  Condition ($\star$) and the case begin considered require that there be no possibility realizing $\h$ containing $Q$ in Case 1; then, we do not need to consider the case.  In Case 5, $Q$ cannot be the sticking chord; it is easy to verify the claim.  
\item Three-chord case.  Case 5 has no possibility to realize $\h$ containing $Q$.  Case 1 fixes $\h$ containing $Q$, which becomes the sticking chord, and therefore, we easily obtain the claim.  
\end{itemize}
\item $\thr$.  
\begin{itemize}
\item One-chord case.  Through Fig.~\ref{case1--6} (Cases 1 and 5), it is easy to verify the claim.  
\item Two-chords case.  We notice that the shaded part must contain two adjacent chords of $\thr$ by the condition ($\star$).  Accordingly, there is no possibility to realize $\thr$ containing $Q$ in Case 1.  
Simlarly, we have the claim in this case using Fig.~\ref{case1--6} (Case 5).    
\item Three-chord case.  Case 1 has no possibility to realize $\thr$ containing $Q$.  Case 5 has three parallel chords containing $Q$ in the shaded part and it is easy to verify the claim.  
\item Four-chord case.  Case 5 has no possibility to realize $\thr$ containing $Q$.  In Case 1, $Q$ must cross three parallel chords that fix $\thr$, and thus, we have the claim.   
\end{itemize}
\item $\HH$.  
\begin{itemize}
\item One-chord case.  Fig.~\ref{case1--6} (Cases 1 and 5) illustrates the claim.  
\item Two-chord case.  In this case, two parallel strands of $\HH$ must be in the shaded part.  Then, Case 1 has no possibility to realize $\HH$ containing $Q$.  In Case 5, the claim is easily shown.  
\item Three-chord case and Four-chord case.  Condition ($\star$) requires that there be no possibility to realize $\HH$ containing $Q$ in Cases 1 and 5.  
\end{itemize}
\end{itemize}
\end{proof}
\begin{remark}
In Fig.~\ref{hyou5}, there are three pairs $(7_A, 7_6)$, $(7_B, 7_7)$, and $(7_C, 7_5)$ with respect to flypes.  In each pair, one can be related to another by one flype.  
\end{remark}


\section{Relationship of the number of sub-chord diagrams and Arnold invariants.}\label{average_sec}
This section contains comments regarding the relationship between our study and Arnold invariants.  
Theorem \ref{thm_sec4} counts the number of sub-chord diagrams in $CD_P$ of a knot projection $P$.  In comparison, for the Arnold invariants $J^{+}$, $J^-$, and $St$, Averaged invariant $-(J^{+} + 2St)/2$ counts the sum of signs $\pm 1$, where each sign is assigned to a sub-chord $\otimes$ (further details can be found in \cite{polyak1998}; note also that Polyak's original Averaged invariant is $(J^{+} + 2 St)/8$).  Let $P$ be an arbitrary knot projection (the image of an immersion) on $S^2$.  Putting $\infty$ on this arbitrarily selected region $r(\infty)$ from $S^{2} \setminus P$, $P$ can be regard as a plane immersed curve and is denoted by $P_r(\infty)$.  Arnold invariants, $J^+$, $J^-$, and $St$, are defined for plane immersed curves.  Proceeding further, $J^{+}(P_r(\infty) + 2 St(P_{r(\infty)}))$ does not depend on the selection of $r(\infty)$ (see \cite[Sec.~2.4]{polyak1998}).  Thus, we have an integer $J^{+}(P) + 2 St(P)$ for an arbitrary spherical curve $P$.  Then, Averaged invariant $a(P)$ is defined by
\[a(P) = -(J^{+}(P) + 2 St(P))/2.  \]
Recall that any two knot projections $P_1$ and $P_2$ are related by a finite sequence of three types of Reidemeister moves, as shown in Fig.~\ref{ych5}.  The definition of $a(P)$ implies (\ref{p1})--(\ref{p3}).  
\begin{remark}
Let $m$ be the total number of weak second and third Reidemeister moves in a finite sequence consisting of first, second, and third Reidemeister moves between two knot projections $P_1$ and $P_2$.  
\begin{enumerate}
\item $a(P_1) - a(P_2)$ $\equiv$ $m$ (mod $2$) \label{p1}, 
\item $a(P)$ is invariant under RI, \label{p4}
\item $a(P)$ is invariant under strong RI\!I \label{p5}
\item a single RI\!I\!I changes $a(P)$ by $\pm 1$,  \label{p2}
\item a single weak RI\!I changes $a(P)$ by $\pm 1$.  \label{p3}
\end{enumerate}
\end{remark}
\begin{proof}
The definitions of $J^+$ and $St$ immediately imply (\ref{p5}), (\ref{p2}), and (\ref{p3}).  Thus, if we have (\ref{p4}), then we have (\ref{p1}).  Here, we recall Polyak's formula for $a(P)$, which directly implies (\ref{p4}).  

Let $X^{*}$ be a $\otimes$ with a base point on $S^1$ of $\otimes$ apart from any endpoints of the two chords.  Similarly, a chord diagram with a base point is denoted by $CD^{*}_P$ which is defined as a chord diagram $CD_P$ with a point on $S^{1}$ except for any endpoints of chords.  Note that the orientation of $P$ having the base point, which is on the curve except for double points, corresponds to the orientation of $CD^{*}_P$ when we always orient $S^{1}$ of $CD_P$ counterclockwise.  Along the lines of \cite[Sec.~6.4]{polyak1998}, we recall Polyak's formulation of $a(P)$ as follows.  

Let us obtain any orientation of $S^{2}$ and any orientation of a knot projection $P$.  We start from the base point and move along the orientation $P$.  Each time we pass through a double point for the first time, we attach a sign ($= -1, 1$).  For each double point $\double$ through which branch $t$ passes, we assign a pair $(t_1, t_2)$ that indicates the orientation rotating from $t_1$ to $t_2$.   If the orientation $(t_1, t_2)$ is (resp.,~is not) equal to the orientation $S^2$, the sign of the double point is $-1$ (resp.,~$1$).  For instance, choosing appropriate orientations of the sphere, we describe the sign simply as follows.  For each double point $\double$ having branches $t_1$ and $t_2$, where $t_1$ (resp.,~$t_2$) is the branch we pass through when we pass through the double point for the first (resp.,~second) time, if $t_1$ is the arrow from the bottom left to the top right, the double point has sign $=$ $-1$ and if not, the sign is $1$.  Assign each sign of a double point to each corresponding chord.  Then sub-chord $X^{*}$ embedded into $CD^{*}_P$ has two signs $\epsilon_0(X^{*})$ and $\epsilon_1(X^{*})$ for each $X^{*}$.  As in \cite[Page 997, Formula (3)]{polyak1998}, we can show
\[\sum_{ X^{*}~{\text{embedded in $CD^{*}_P$}} } \epsilon_0(X^{*}) \epsilon_1(X^{*}) = a(P).\]
By the above formula, the first Reidemeister move does not affect $a(P)$.  That completes the proof.
\end{proof}
\begin{remark}
We present a table of the values of Averaged invariants $a(P)$ $=$ $-(J^+(P) + 2St(P))/2$ for prime reduced knot projections with up to seven double points, which appear in Figs.~\ref{hyou3} and \ref{hyou4}.

\noindent
\scalebox{0.88}{
\begin{tabular}{|c|c|c|c|c|c|c|c|c|c|c|c|c|c|c|c|c|c|c|} \hline\label{average_table} 
\!\!$\bigcircle$&$3_1$ & $4_1$ & $5_1$ & $5_2$ & $6_1$ & $6_2$ & $6_3$ & $7_1$ & $7_2$ & $7_3$ & $7_4$ & $7_5$ & $7_6$ & $7_7$ & $7_A$ & $7_B$ & $7_C$ \\  
$0$&$-1$ & $0$ & $-2$ & $-1$ & $0$ & $-1$ & $-2$&$-3$&$-1$&$-2$&$-1$&$-2$&$-1$& $0$ & $-1$ & $0$ & $-2$\\ \hline
\end{tabular}
}
\cite{IT2} introduced another integer-valued additive invariant and a complete invariant for prime reduced knot projections with up to seven double points except for one pair under an equivalence relation determined by RI and strong RI\!I.  
\end{remark}

\section{Tables of knot projections with invariants.}
Finally, we present two tables.  The first consists of Figs.~\ref{hyou3} and \ref{hyou4}.   The table contains prime reduced knot projections with up to seven double points and their chord diagrams, each of which has the number of cross chords, triple chords, H-chords, $\HH$-type sub-chord diagrams, or I\!I\!I-chords embedded in the chord diagram of the knot projection.  Fig.~\ref{hyou5} shows prime reduced knot projections with integers that are the values of $\lambda$.  Note that prime knot projections lacking only the knot projection $\infty$ are prime reduced knot projections.  Here, a prime knot projection is defined as a knot projection that cannot be represented as the connected sum of two non-trivial knot projections.  The second table consists of prime reduced knot projections with up to seven double points with $\lambda$.  For two knot projections $P_1$ and $P_2$, we connect $P_1$ and $P_2$ with a line in the table, if $P_1$ can be related to $P_2$ using RI and strong RI\!I\!I under the following rule: except for a pair $(7_B, 7_4)$, every line indicates the existence of a sequence of a finite number of RIs and a strong RI\!I\!I (Fig.~\ref{hyou5}).  
\begin{figure}[h!]
\includegraphics[width=12cm]{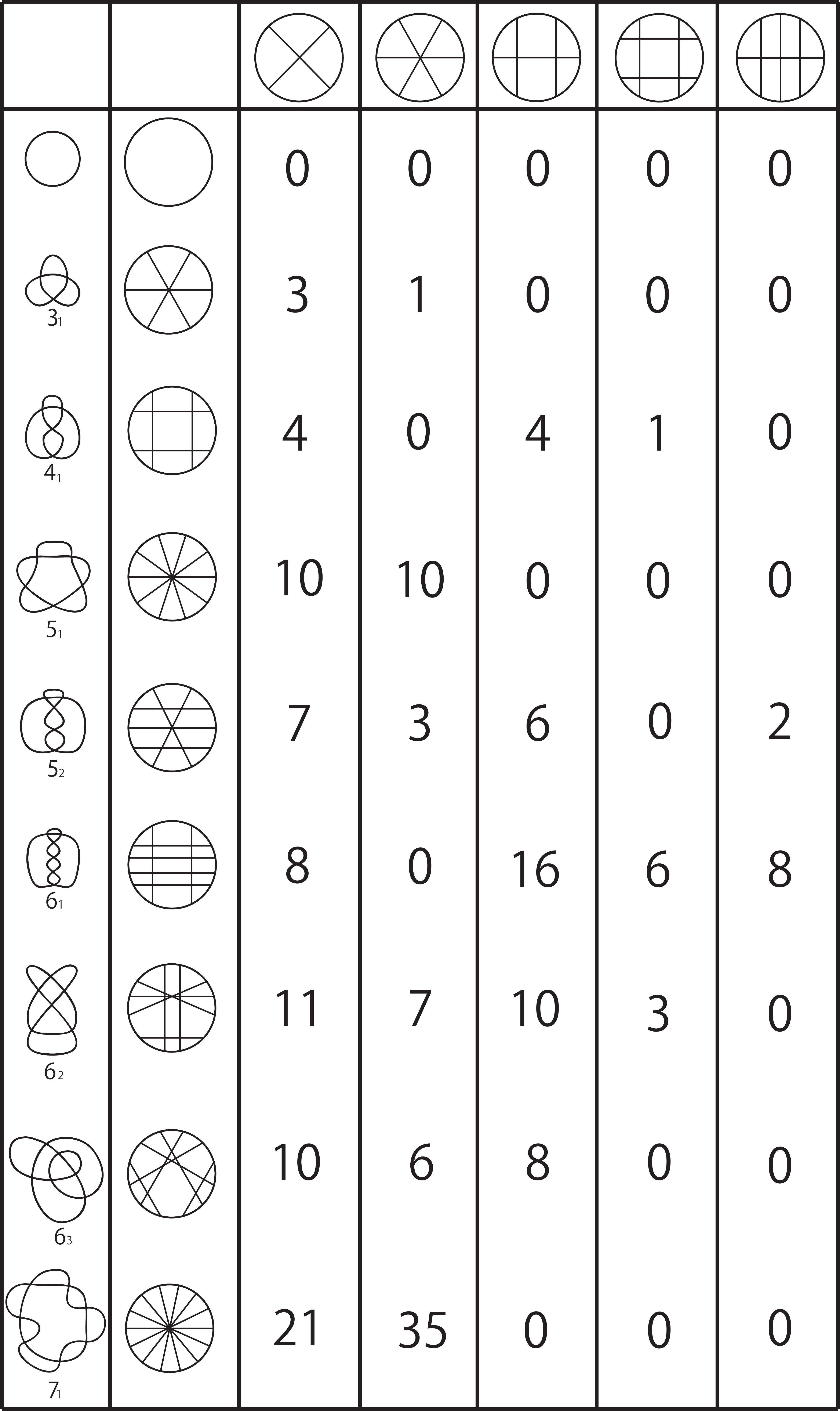}
\caption{Table 1: knot projections from a simple closed curve to $7_1$.}\label{hyou3}
\end{figure}
\begin{figure}[h!]
\includegraphics[width=12cm]{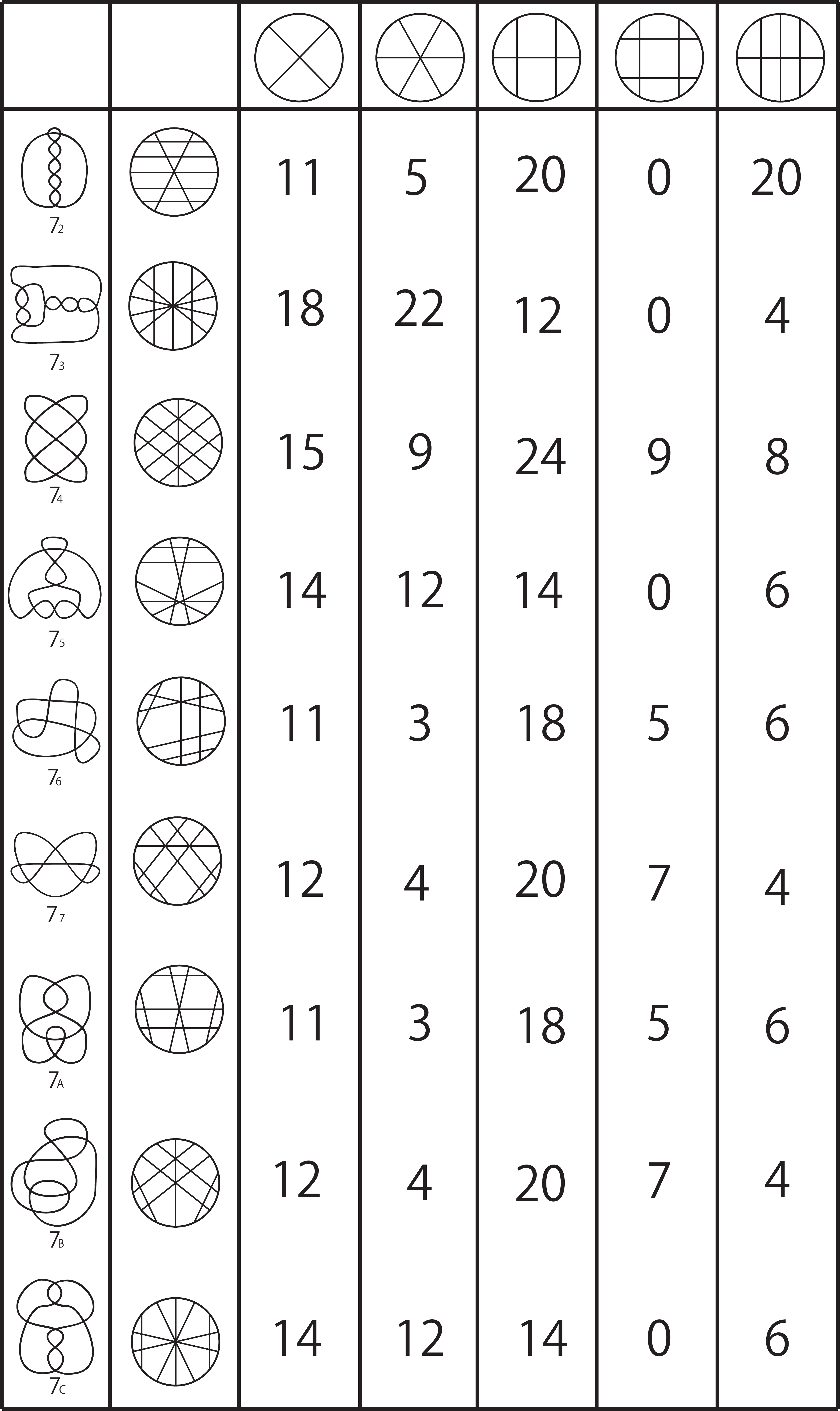}
\caption{Table 2: knot projections $7_2$--$7_C$.  
}\label{hyou4}
\end{figure}
\begin{figure}[h!]
\includegraphics[width=12cm]{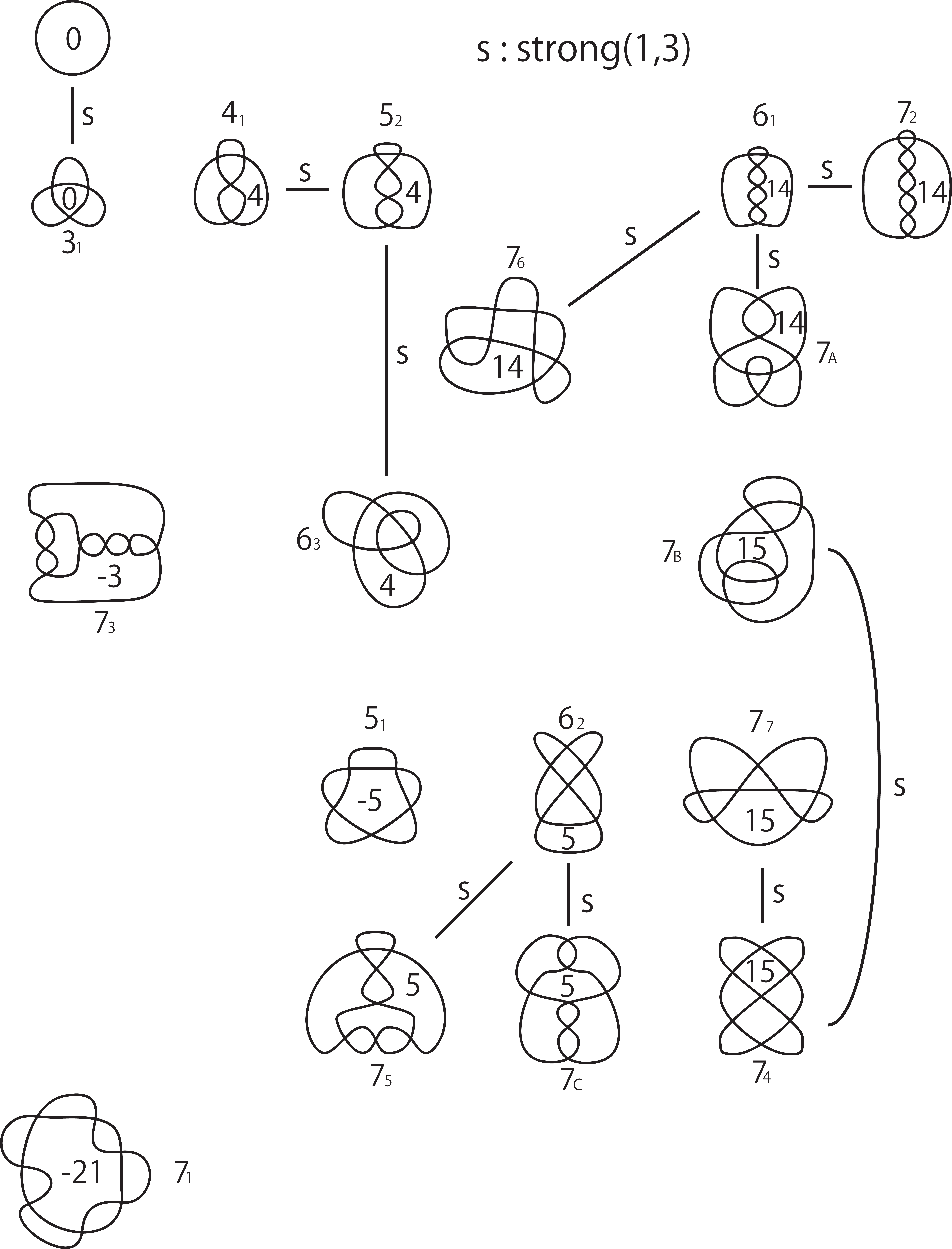}
\caption{Table of prime reduced knot projections with up to seven double points with $\lambda$.  Integers with knot projections stand for $\lambda$.  
}\label{hyou5}
\end{figure}

\section*{Acknowledgements}
The authors would like to thank Professor Kouki Taniyama for his helpful comments.  The authors would also like to thank the referee for their comments on earlier versions of this paper.  The work of N. Ito was partly supported by a Waseda University Grant for Special Research Projects (Project number: 2014K-6292) and the JSPS Japanese-German Graduate Externship.  

\end{document}